\newcommand{\globalcolor}[1]{%
  \color{#1}\global\let\default@color\current@color
}
\definecolor{blush}{rgb}{0.87, 0.36, 0.51}
	\definecolor{brightcerulean}{rgb}{0.11, 0.67, 0.84}
	\definecolor{greenryb}{rgb}{0.4, 0.69, 0.2}
\newif\ifdark
\definecolor{darkred}{rgb}{0.9,0.2,0.2}
\definecolor{darkblue}{rgb}{0.7,0.3,1}
\definecolor{darkgreen}{rgb}{0.1,0.9,0.1}
\definecolor{franck}{rgb}{0,0.8,1}
\definecolor{pagebackground}{rgb}{.15,.21,.18}
\definecolor{pageforeground}{rgb}{.84,.84,.85}
\definecolor{symbols}{rgb}{0,0.7,1}
\colorlet{connection}{red!80!black}
\colorlet{boxcolor}{blue!50}
\definecolor{darkred}{rgb}{0.7,0.1,0.1}
\definecolor{darkblue}{rgb}{0.4,0.1,0.8}
\definecolor{darkgreen}{rgb}{0.1,0.7,0.1}
\definecolor{franck}{rgb}{0,0,1}
\definecolor{pagebackground}{rgb}{1,1,1}
\definecolor{pageforeground}{rgb}{0,0,0}
\colorlet{symbols}{blue!90!black}
\colorlet{connection}{red!30!black}
\colorlet{boxcolor}{blue!50!black}
\def\slash{\leavevmode\unskip\kern0.18em/\penalty\exhyphenpenalty\kern0.18em}
\def\dash{\leavevmode\unskip\kern0.18em--\penalty\exhyphenpenalty\kern0.18em}
\DeclareMathAlphabet{\mathbbm}{U}{bbm}{m}{n}
\DeclareFontFamily{U}{BOONDOX-calo}{\skewchar\font=45 }
\DeclareFontShape{U}{BOONDOX-calo}{m}{n}{
  <-> s*[1.05] BOONDOX-r-calo}{}
\DeclareFontShape{U}{BOONDOX-calo}{b}{n}{
  <-> s*[1.05] BOONDOX-b-calo}{}
\DeclareMathAlphabet{\mcb}{U}{BOONDOX-calo}{m}{n}
\SetMathAlphabet{\mcb}{bold}{U}{BOONDOX-calo}{b}{n}
\setlist{noitemsep,topsep=4pt,leftmargin=1.5em}
\DeclareMathAlphabet{\mathbbm}{U}{bbm}{m}{n}
\DeclareMathAlphabet{\mcb}{U}{BOONDOX-calo}{m}{n}
\SetMathAlphabet{\mcb}{bold}{U}{BOONDOX-calo}{b}{n}
\DeclareFontFamily{U}{mathx}{\hyphenchar\font45}
\DeclareFontShape{U}{mathx}{m}{n}{
      <5> <6> <7> <8> <9> <10>
      <10.95> <12> <14.4> <17.28> <20.74> <24.88>
      mathx10
      }{}
\DeclareSymbolFont{mathx}{U}{mathx}{m}{n}
\DeclareMathSymbol{\bigtimes}{1}{mathx}{"91}
\providecommand{\figures}{false}
{ \ifthenelse{\equal{\figures}{false}} {#1}{\[ {\rm Figure \ missing !} \]} }{}
\def\id{\mathrm{id}}
\def\CH{\mathcal{H}}
\def\CT{\mathcal{T}}
\tikzstyle{tinydots}=[dash pattern=on \pgflinewidth off \pgflinewidth]
\tikzstyle{superdense}=[dash pattern=on 4pt off 1pt]
\newcommand{\mcT}{\mathcal{T}}
\newcommand{\beq}{\begin{equation}}
\newcommand{\eeq}{\end{equation}}
\newcommand{\mfe}{\mathfrak{e}}
\newcommand{\mff}{\mathfrak{f}}
\def\${|\!|\!|}
\newcounter{theorem}
\newtheorem{meta}[theorem]{Metatheorem}
\newenvironment{DIFnomarkup}{}{} % see man latexdiff
\newcommand{\rrightarrow}{{\to\hskip -4.9mm\raise 1pt\hbox{$\to$}}}
\newfont{\indic}{bbmss12}
\def\Nabla_#1{\nabla_{\!#1}}
    \pgfmathsetlength{\pgf@xb}{\pgfkeysvalueof{/pgf/outer xsep}}%  
    \pgfmathsetlength{\pgf@yb}{\pgfkeysvalueof{/pgf/outer ysep}}%  
\def\symbol#1{\textcolor{symbols}{#1}}
\def\decorate#1#2{
        \ifnum#2>0
    		\foreach \count in {1,...,#2}{
	       	let
				\p1 = (sourcenode.center),
                \p2 = (sourcenode.east),
				\n1 = {\x2-\x1},
				\n2 = {1mm},
				\n3 = {(1.3+0.6*(\count-1))*\n1},
				\n4 = {0.7*\n1}
			in 
        		node[rectangle,fill=symbols,rotate=30,inner sep=0pt,minimum width=0.2*\n2,minimum height=\n2] at ($(sourcenode.center) + (\n3,\n4)$) {}
				}
		\fi
        \ifnum#1>0
    		\foreach \count in {1,...,#1}{
	       	let
				\p1 = (sourcenode.center),
                \p2 = (sourcenode.east),
				\n1 = {\x2-\x1},
				\n2 = {1mm},
				\n3 = {(1.3+0.6*(\count-1))*\n1},
				\n4 = {0.7*\n1}
			in 
        		node[rectangle,fill=symbols,rotate=-30,inner sep=0pt,minimum width=0.2*\n2,minimum height=\n2] at ($(sourcenode.center) + (-\n3,\n4)$) {}
				}
		\fi
}
\tikzset{
    dectriangle/.style 2 args={
        triangle,
        alias=sourcenode,
        append after command={\decorate{#1}{#2}}
    },
    dectriangle/.default={0}{0},
}
\tikzset{
	cross/.style={path picture={ 
  		\draw[symbols]
			(path picture bounding box.south east) -- (path picture bounding box.north west) (path picture bounding box.south west) -- (path picture bounding box.north east);
		}},
root/.style={circle,fill=green!50!black,inner sep=0pt, minimum size=1.2mm},
        dot/.style={circle,fill=pageforeground,inner sep=0pt, minimum size=1mm},
        dotred/.style={circle,fill=pageforeground!50!pagebackground,inner sep=0pt, minimum size=2mm},
        var/.style={circle,fill=pageforeground!10!pagebackground,draw=pageforeground,inner sep=0pt, minimum size=3mm},
        var2/.style={circle,fill=darkgreen,draw=pageforeground,inner sep=0pt, minimum size=3mm},
        kernel/.style={semithick,draw=green,shorten >=2pt,shorten <=2pt},
        kernels/.style={snake=zigzag,shorten >=2pt,shorten <=2pt,segment amplitude=1pt,segment length=4pt,line before snake=2pt,line after snake=5pt,},
        rho/.style={densely dashed,semithick,shorten >=2pt,shorten <=2pt},
           testfcn/.style={dotted,semithick,shorten >=2pt,shorten <=2pt},
        renorm/.style={shape=circle,fill=pagebackground,inner sep=1pt},
        labl/.style={shape=rectangle,fill=pagebackground,inner sep=1pt},
        xic/.style={very thin,circle,draw=symbols,fill=symbols,inner sep=0pt,minimum size=1.2mm},
        g/.style={very thin,rectangle,draw=symbols,fill=symbols!10!pagebackground,inner sep=0pt,minimum width=2.5mm,minimum height=1.2mm},
        xi/.style={very thin,circle,draw=symbols,fill=symbols!10!pagebackground,inner sep=0pt,minimum size=1.2mm},
	xies/.style={very thin,rectangle,fill=green!50!black!25,draw=symbols,inner sep=0pt,minimum size=1.1mm},
	xiesf/.style={very thin,rectangle,fill=green!50!black,draw=symbols,inner sep=0pt,minimum size=1.1mm},
        xix/.style={very thin,crosscircle,fill=symbols!10!pagebackground,draw=symbols,inner sep=0pt,minimum size=1.2mm},
        X/.style={very thin,cross,rectangle,fill=pagebackground,draw=symbols,inner sep=0pt,minimum size=1.2mm},
	xib/.style={thin,circle,fill=symbols!10!pagebackground,draw=symbols,inner sep=0pt,minimum size=1.6mm},
	xie/.style={thin,circle,fill=green!50!black,draw=symbols,inner sep=0pt,minimum size=1.6mm},
	xid/.style={thin,circle,fill=symbols,draw=symbols,inner sep=0pt,minimum size=1.6mm},
	xibx/.style={thin,crosscircle,fill=symbols!10!pagebackground,draw=symbols,inner sep=0pt,minimum size=1.6mm},
	kernels2/.style={very thick,draw=connection,segment length=12pt},
	keps/.style={thin,draw=symbols,->},
	kepspr/.style={thick,draw=connection,->},
	krho/.style={thin,draw=symbols,superdense,->},
	krhopr/.style={thick,draw=connection,superdense},
	triangle/.style = { regular polygon, regular polygon sides=3},
	not/.style={thin,circle,draw=connection,fill=connection,inner sep=0pt,minimum size=0.5mm},
	diff/.style = {very thin,draw=symbols,triangle,fill=red!50!black,inner sep=0pt,minimum size=1.6mm},
	diff1/.style = {very thin,dectriangle={1}{0},fill=red!50!black,draw=symbols,inner sep=0pt,minimum size=1.6mm},
	diff2/.style = {very thin,dectriangle={1}{1},fill=red!50!black,draw=symbols,inner sep=0pt,minimum size=1.6mm},
		diffmini/.style = {very thin,rectangle,fill=black,draw=black,inner sep=0pt,minimum size=0.75mm},
	 kernelsmod/.style={very thick,draw=connection,segment length=12pt},
	 rec/.style = {very thin,rectangle,fill=black,draw=black,inner sep=0pt,minimum size=2mm},
	cerc/.style={very thin,circle,draw=black,fill=symbols,inner sep=0pt,minimum size=2mm},
	stars/.style={very thin,star,star points=6,star point ratio=0.5, draw=black,fill=red,inner sep=0pt,minimum size=0.7mm},
	>=stealth,
        }
        \tikzset{
root/.style={circle,fill=black!50,inner sep=0pt, minimum size=3mm},
        circ/.style={circle,fill=white,draw=black,very thin,inner sep=.5pt, minimum size=1.2mm},
        round1/.style={fill=white,outer sep = 0,inner sep=2pt,rounded corners=1mm,draw,text=black,thin,minimum size=1.2mm},
          circ1/.style={circle,fill=red!10,draw=red,very thin,inner sep=.5pt, minimum size=1.2mm},
        rect/.style={fill=white,outer sep = 0,inner sep=2pt,rectangle,draw,text=black,thin,minimum size=1.2mm},
        rect1/.style={fill=white,outer sep = 0,inner sep=2pt,rectangle,draw,text=black,thin,minimum size=1.2mm},
        round2/.style={fill=red!10,outer sep = 0,inner sep=2pt,rounded corners=1mm,draw,text=black,thin,minimum size=1.2mm},
       round3/.style={fill=blue!10,outer sep = 0,inner sep=2pt,rounded corners=1mm,draw,text=black,thin,minimum size=1.2mm}, 
        rect2/.style={fill=black!10,outer sep = 0,inner sep=2pt,rectangle,draw,text=black,thin,minimum size=1.2mm},
        dot/.style={circle,fill=black,inner sep=0pt, minimum size=1.2mm},
        dotred/.style={circle,fill=black!50,inner sep=0pt, minimum size=2mm},
        var/.style={circle,fill=black!10,draw=black,inner sep=0pt, minimum size=3mm},
        kernel/.style={semithick,draw=darkgreen},
         diag/.style={thin,shorten >=4pt,shorten <=4pt},
        kernel1/.style={thick},
        kernels/.style={snake=zigzag,shorten >=2pt,shorten <=2pt,segment amplitude=1pt,segment length=4pt,line before snake=2pt,line after snake=5pt},
		kernels1/.style={snake=zigzag,segment amplitude=0.5pt,segment length=2pt},
		rho1/.style={densely dotted,semithick},
        rho/.style={densely dashed,semithick,shorten >=2pt,shorten <=2pt},
           testfcn/.style={dotted,semithick,shorten >=2pt,shorten <=2pt},
           visible/.style={draw, circle, fill, inner sep=0.25ex},
        renorm/.style={shape=circle,fill=white,inner sep=1pt},
        labl/.style={shape=rectangle,fill=white,inner sep=1pt},
        xic/.style={very thin,circle,fill=symbols,draw=black,inner sep=0pt,minimum size=1.2mm},
        xi/.style={very thin,circle,fill=blue!10,draw=black,inner sep=0pt,minimum size=1.2mm},
	xib/.style={very thin,circle,fill=blue!10,draw=black,inner sep=0pt,minimum size=1.6mm},
	xie/.style={very thin,circle,fill=green!50!black,draw=black,inner sep=0pt,minimum size=1mm},
	xid/.style={very thin,circle,fill=symbols,draw=black,inner sep=0pt,minimum size=1.6mm},
	edgetype/.style={very thin,circle,draw=black,inner sep=0pt,minimum size=5mm},
	nodetype/.style={very thick,circle,draw=black,inner sep=0pt,minimum size=5mm},
	kernels2/.style={very thick,draw=connection,segment length=12pt},
clean/.style={thin,circle,fill=black,inner sep=0pt,minimum size=1mm},	not/.style={thin,circle,fill=symbols,draw=connection,fill=connection,inner sep=0pt,minimum size=0.8mm},
	>=stealth,
        }
\def\DeclareSymbol#1#2#3{%
	\expandafter\gdef\csname MH@symb@#1\endcsname{\tikzsetnextfilename{symbol#1}%
	\tikz[baseline=#2,scale=0.15,draw=symbols,line join=round]{#3}}%
	\expandafter\gdef\csname MH@symb@#1s\endcsname{\scalebox{0.75}{\tikzsetnextfilename{symbol#1}%
	\tikz[baseline=#2,scale=0.15,draw=symbols,line join=round]{#3}}}%
	\expandafter\gdef\csname MH@symb@#1ss\endcsname{\scalebox{0.65}{\tikzsetnextfilename{symbol#1}%
	\tikz[baseline=#2,scale=0.15,draw=symbols,line join=round]{#3}}}%
	}
\def\<#1>{\ifthenelse{\boolean{mmode}}{\mathchoice{\csname MH@symb@#1\endcsname}{\csname MH@symb@#1\endcsname}{\csname MH@symb@#1s\endcsname}{\csname MH@symb@#1ss\endcsname}}{\csname MH@symb@#1\endcsname}}
 \def\1{\mathbf{\symbol{1}}}
\def\one{\mathbf{1}}
\DeclareMathAlphabet{\mathpzc}{OT1}{pzc}{m}{it}
\def\eqref#1{(\ref{#1})}
\newcommand*{\bigcdot}{}% Check if undefined
\DeclareRobustCommand*{\bigcdot}{%
  \mathbin{\mathpalette\bigcdot@{}}%
}
\newcommand*{\bigcdot@scalefactor}{.5}
\newcommand*{\bigcdot@widthfactor}{1.15}
\newcommand*{\bigcdot@}[2]{%
  % #1: math style
  % #2: unused
  \sbox0{$#1\vcenter{}$}% math axis
  \sbox2{$#1\cdot\m@th$}%
  \hbox to \bigcdot@widthfactor\wd2{%
    \hfil
    \raise\ht0\hbox{%
      \scalebox{\bigcdot@scalefactor}{%
        \lower\ht0\hbox{$#1\bullet\m@th$}%
      }%
    }%
    \hfil
  }%
}
\def\two{{\<generic>\kern0.05em\<genericb>}}
\def\twoI{{\<Ito>\kern0.05em\<Itob>}}
\def\mail#1{\burlalt{#1}{mailto:#1}}
\newcommand{\nlsletter}[4]{
	\begin{tikzpicture}[scale=0.2,baseline=-5]
		\coordinate (root) at (0,-1);
		\coordinate (rightc) at (1,2);
		\coordinate (right) at (3,2);
		\coordinate (leftc) at (-1,2);
		\coordinate (left) at (-3,2);
		\draw[kernels2] (right) -- (root);
		\draw[kernels2] (rightc) -- (root);
		\draw[kernels2,tinydots] (left) -- (root);
		\draw[kernels2,tinydots] (leftc) -- (root);
		\node[var2] (rootnode) at (left) {\tiny$#1$};
		\node[var] (rootnode) at (leftc) {\tiny$#2$};	
		\node[var] (rootnode) at (rightc) {\tiny$#3$};	
		\node[var] (rootnode) at (right) {\tiny$#4$};
	\end{tikzpicture}
}
\newcommand{\nlsconjletter}[4]{
	\begin{tikzpicture}[scale=0.2,baseline=-5]
		\coordinate (root) at (0,-1);
		\coordinate (rightc) at (1,2);
		\coordinate (right) at (3,2);
		\coordinate (leftc) at (-1,2);
		\coordinate (left) at (-3,2);
		\draw[kernels2,tinydots] (right) -- (root);
		\draw[kernels2,tinydots] (rightc) -- (root);
		\draw[kernels2] (left) -- (root);
		\draw[kernels2] (leftc) -- (root);
		\node[var2] (rootnode) at (left) {\tiny$#1$};
		\node[var] (rootnode) at (leftc) {\tiny$#2$};	
		\node[var] (rootnode) at (rightc) {\tiny$#3$};	
		\node[var] (rootnode) at (right) {\tiny$#4$};
	\end{tikzpicture}
}
\begin{document}

\title{Cancellations for dispersive PDEs with random initial data}
\author{Yvain Bruned$^1$, Leonardo Tolomeo$^2$}
\institute{ 
 Université de Lorraine, CNRS, IECL, F-54000 Nancy, France
 \and School of Mathematics, University of Edinburgh
  \\
Email:\ \begin{minipage}[t]{\linewidth}
\mail{yvain.bruned@univ-lorraine.fr}
\\
\mail{l.tolomeo@ed.ac.uk}.
\end{minipage}}

\maketitle

\begin{abstract}
In this work, we provide a combinatorial formalism for dealing with the cancellations that have appeared recently in the context of dispersive PDEs with random initial data. The main idea is to transform iterated integrals encoded by decorated trees into words via an arborification map. This provides a formalism alternative to the one of molecules introduced by Deng and Hani (2023). 
It allows us to compute the cancellations coming from Wave turbulence and the proof of the invariance of the Gibbs measure under the dynamics of the three-dimensional cubic wave equation.
\end{abstract}

\setcounter{tocdepth}{1}
\tableofcontents

\section{Introduction}

In recent years, perturbative expansions involving trees, tensors and their associated Feynman diagrams have been pushed forward in the context of dispersive equations with random initial data in order to obtain (sub)critical results. One of the main pushes in this direction is the work \cite{DNY22}, which introduced the concept of ``random tensors" and provided the analytical framework to compute the tensor norm of Feynman diagrams seen as appropriate (random) multilinear operators. 
The authors immediately applied the theory to showing almost-sure local well-posedness for semilinear Schr\"odinger equations, subcritical under a suitable ``probabilistic scaling".

However, in contrast to Regularity Structures, for which a black box has been designed in  \cite{reg,BHZ,CH16,BCCH} for solving locally a large class of parabolic subcritical equations, such a general formulation is not there in the context of dispersive equations with random initial data. Even equipped with the random tensors theory, one has to complete many estimates in \cite{BDNY24}. The main difficulty is to take into account  the specificities of various equations which at some point is also the case for parabolic equations in long time existence results where no general theory is available.

Nevertheless, this has not stopped the community from showing a number of fundamental results. In \cite{DH23,DH2301,DH2311}, Deng and Hani developed a rigorous justification for wave-kinetic equations . This problem is critical (in a suitable sense), and the main strategy of the proof involves summing infinitely many Feynman diagrams. 
More recently, using the same ideas, a long time derivation of the Boltzmann equation has been provided in \cite{DHM24}. 
In parallel, Bringmann, Deng, Nahmod and Yue in \cite{BDNY24} showed that the $\Phi^4_3$ (Gibbs) measure is invariant under the dynamics of the three-dimensional cubic wave equation. 

Both results above are extremely impressive from an analytical point of view, but something striking about both results is that they rely on some ``miraculous cancellations" \cite[Section 3.3]{DH2301} between Feynman diagrams. 
In \cite{DH23}, in order to perform the combinatorics necessary, the authors introduced the concept of "molecules", which are simplified Feynman diagrams that retain the information necessary in order to perform the analytical estimates for the wave-kinetic theory. They suggest that the ``miraculous cancellations" can be guessed by observing that the Feynman diagrams that cancels out correspond to the same molecule. This concept was also used in \cite{BDNY24} in order to greatly simplify the number of analytical estimates necessary, however, as far as the author are aware, they do not explain the necessary cancellation between sextic objects.

%This problem is actually critical and they have to start the equation with a random initial data at equilibrium. Another direction is
%Wave turbulence in \cite{DH23} where the authors have derived the wave kinetic equation from the cubic nonlinear Schrödinger (NLS) equation at the kinetic timescale. They push forward this direction in  \cite{DH2301,DH2311} where the authors managed to 
% cover the full range of scaling laws for the NLS and extend extend their results to arbitrarily long times.

The main aim of this work is to introduce a general unified framework for computing and understanding the cancellations above. Before presenting our framework and its
key ideas, we take a small detour and briefly explain how cancellations are treated in the context of parabolic equations.

For parabolic singular SPDEs, a hidden logarithmic cancellation has been first observed for the KPZ equation in \cite{KPZ}, where rough paths techniques are used for solving this singular equation. The computations there do not use general graphical rules. 
The approach started to become more systematic in \cite{HP15,HQ18}, where graphical rules were introduced. These rely on the fact that the heat kernel $ K $ is non-anticipative, therefore it loops in some oriented Feynman diagrams, which allows improved estimates. Moreover, one uses the following relation in the context of the KPZ equation:
\begin{equs} \label{key_identity_parabolic}
	(\partial_x K * \partial_x K)(z) = \frac{1}{2} ( K(z) + K(-z) )
\end{equs}
where $ *$ is the space-time convolution. See \cite[Lemma 6.11]{HQ18}, where one has this identity up to a small error. Such an identity reflects the fact that the heat kernel is the fundamental solution of the heat equation. Equipped with this formalism, one is able to compute and check cancellations in \cite{Bru} for the generalised KPZ equation. 
This allows us to consider solutions that are ``geometric", meaning that they satisfy the chain rule property. These identities have been pushed further in \cite{Mate19} with general integration by parts identities. They are needed in the context of quasilinear SPDEs to make sure that one gets only local counter-terms in the solution at the level of the renormalised equation (see \cite{BGN24} for a general statement). 
These cancellations have been understood at a more conceptual level in \cite{BGHZ}, where the chain rule symmetry has been characterised as the kernel of a linear map defined on decorated trees. 
The dimension of this kernel and its basis is computed only for space-time white noise in \cite{BGHZ}. The full subcritical regime is treated in a systematic way via operad theory and homological algebra in \cite{BD24}. The specific case of dimension one is considered in \cite{BB24} with multi-indices and elementary techniques.

In this work, we suggest a formalism that is analogous to the first steps of the parabolic counterpart, and we provide an efficient graphical formalism for computing cancellations. Firstly, we have to propose an equivalent of \eqref{key_identity_parabolic} for dispersive equations.
We use two key identities in this paper. The first one for nonlinear Schrödinger equation is given in Proposition~\ref{covariance_2} by:
	\begin{equs} \label{key_identity_NLS}
		e^{i(s-t) k^2} =  	\mathbb{E}( e^{-it k^2}\eta_{k} \overline{e^{-is k^2}\eta_{k}} ),
	\end{equs}
where the $\eta_k$ are i.i.d Gaussian complex random variables.
The second identity is derived in the context of Wave equation given in  Proposition~\ref{covariance} by:
	\begin{equs} \label{key_identity_Wave}
		\frac{\sin((t-t') \langle n \rangle)}{\langle n  \rangle} = - \partial_t	\mathbb{E}( v_n(t) v_{-n'}(t') ) = \partial_{t'}	\mathbb{E}( v_n(t) v_{-n'}(t') ),
	\end{equs}
where $ v_n(t) $ is the solution of the linear wave equation with a random initial data. 
Both identities rewrite the fundamental kernel associated with the equation into an expectation, which directly relates the kernel to the random initial data. 
From the point of view of the associated graphical rules, this is somewhat different from \eqref{key_identity_parabolic}. Indeed, \eqref{key_identity_NLS} and \eqref{key_identity_Wave} could be understood graphically as splitting one edge associated with the fundamental kernel in two edges connected by an expectation, whereas  \eqref{key_identity_parabolic} changes two edges into one via the space-time convolution.
Below we give a brief example illustrating  \eqref{key_identity_NLS}:
	\begin{equs} \label{ex_intro}
		\begin{tikzpicture}[scale=0.2,baseline=-5]
			\coordinate (root) at (0,-1);
			\coordinate (right) at (2,2);
			\coordinate (center) at (0,2);
			\coordinate (left) at (-2,2);
			\coordinate (leftr) at (0,5);
			\coordinate (leftc) at (-2,5);
			\coordinate (leftl) at (-4,5);
			\draw[kernels2] (right) -- (root);
			\draw[symbols,tinydots] (left) -- (root);
			\draw[kernels2] (center) -- (root);
			\draw[kernels2,tinydots] (leftc) -- (left);
			\draw[kernels2,tinydots] (leftr) -- (left);
			\draw[kernels2] (leftl) -- (left);
			\node[not] (rootnode) at (root) {};
			\node[not] (rootnode) at (left) {};
			\node[var] (rootnode) at (right) {\tiny{$ k_{\tiny{2}} $}};
			\node[var] (rootnode) at (center) {\tiny{$ k_{\tiny{1}} $}};
			\node[var] (rootnode) at (leftr) {\tiny{$ k_{\tiny{1}} $}};
			\node[var] (rootnode) at (leftl) {\tiny{$ k_{\tiny{4}} $}};
			\node[var] (trinode) at (leftc) {\tiny{$ k_5 $}};
		\end{tikzpicture} \longrightarrow  \begin{tikzpicture}[scale=0.2,baseline=-5]
			\coordinate (root) at (0,-1);
			\coordinate (rightc) at (1,2);
			\coordinate (right) at (3,2);
			\coordinate (leftc) at (-1,2);
			\coordinate (left) at (-3,2);
			\draw[kernels2,tinydots] (right) -- (root);
			\draw[kernels2] (left) -- (root);
			\draw[kernels2,tinydots] (rightc) -- (root);
			\draw[kernels2] (leftc) -- (root);
			\node[not] (rootnode) at (root) {};
			\node[var2] (rootnode) at (left) {\tiny{$ \ell_{\tiny{1}} $}};
			\node[var] (rootnode) at (leftc) {\tiny{$ k_{\tiny{4}} $}};
			\node[var] (trinode) at (rightc) {\tiny{$ k_{\tiny{5}} $}};
			\node[var] (trinode) at (right) {\tiny{$ k_{\tiny{1}} $}};
		\end{tikzpicture} \, \, \,	\begin{tikzpicture}[scale=0.2,baseline=-5]
			\coordinate (root) at (0,-1);
			\coordinate (right) at (2,2);
			\coordinate (center) at (0,2);
			\coordinate (left) at (-2,2);
			\draw[kernels2] (right) -- (root);
			\draw[kernels2,tinydots] (left) -- (root);
			\draw[kernels2] (center) -- (root);
			\node[not] (rootnode) at (root) {};
			\node[var] (rootnode) at (right) {\tiny{$ k_{\tiny{2}} $}};
			\node[var] (rootnode) at (center) {\tiny{$ k_{\tiny{1}} $}};
			\node[var2] (rootnode) at (left) {\tiny{$ \ell_{\tiny{1}} $}};
		\end{tikzpicture}
		\end{equs}
The decorated trees above correspond to oscillatory integrals where the decorations on the leaves could be associated with some pairings. The blue edge associated with $ e^{i(s-t) k^2} $ is split into two brown edges and we color the new leaves in green to show that they are different from the other leaves.
For a detailed definition of these decorated trees see Section \ref{Sec::2}. We want to stress that this point of view carries similarities to the molecule formulation
in Deng-Hani developed in \cite{DH2301}, where atoms are connected to each other either by leaf-pair bonds
(LP bonds), or by parent-child bonds (PC bonds). This labelling is added to the bonds of the molecule.  The PC bonds correspond precisely to branching nodes and in our formalism we use the green color in \eqref{ex_intro}. Then, one can observe that two trees cancel out if they  have the same molecule picture, but with opposite assignment
of PC or LP on certain bonds. This switching between PC and LP  corresponds in our case to switching the green color in \eqref{switching_nodes}.

The identities \eqref{key_identity_NLS} and \eqref{key_identity_Wave} are applied in some Feynman diagrams that are constructed by taking the expectation of the iterated integrals produced by the Duhamel formulation (integral representation of the equation) of these dispersive equations. The randomness in these iterated integrals is always coming from the initial data.
For describing these objects, we use the decorated trees formalism introduced in \cite{BS} for deriving low regularity schemes for a large class of dispersive equations.  We have already  presented some of these decorated trees in \eqref{ex_intro}. The main idea of this coding is to put decorations on the edges and the nodes of the trees for describing many analytical aspects of the Fourier iterated integrals arising from the Duhamel's formula. It is close in spirit to what has been developed in \cite{BHZ} for singular SPDEs with the aim of covering many equations.
 This formalism has also been used in the context of wave turbulence for the discretisation of the second moment of the Fourier coefficient of the solution in \cite{ABBS24}. Then, with this formalism, one may want to run some Hopf algebra structures for systematising the computation as it has been done in \cite{BS} for the local error analysis. 

It turns out that the natural structures on these iterated integrals are the shuffle algebra and the arborification maps. Indeed, if one has a product of integrals over a simplex, it is possible to rewrite it as a linear combination of iterated integrals over a bigger simplex. For example, one has
\begin{equs} \label{shuffle_identity}
& \ h(t) \times	\int_{0< t_1  < t}  f(t_1)dt_1  \times 	\int_{0< t_2 < t} g(t_2) dt_2 \\  & = h(t) \int_{0< t_1 < t_2 < t} f(t_1) g(t_2) dt_1 dt_2  + h(t) \int_{0< t_2 < t_1 < t} f(t_1) g(t_2) dt_2 dt_1.
\end{equs}
In our context, the functions $f, g$ and $h$ will depend on various frequencies. One can rewrite \eqref{shuffle_identity} into
\begin{equs} \label{identity_two_letters}
(\Pi T)(t) = (\Pi	\begin{tikzpicture}[scale=0.2,baseline=-5]
	\coordinate (root) at (0,-1);
	\coordinate (right) at (-1.5,2);
	\coordinate (left) at (1.5,2);
	\draw[symbols] (right) -- (root);
	\draw[symbols] (left) -- (root);
		\node[var] (rootnode) at (root) {\tiny{$a_3 $}};
	\node[var] (rootnode) at (left) {\tiny{$a_2 $}};
	\node[var] (trinode) at (right) {\tiny{$ a_1 $}};
\end{tikzpicture})(t)   = (\Pi^A a_1 a_2 a_3)(t) + (\Pi^A a_2 a_1 a_3)(t)
\end{equs}
where words are over some alphabet $A$ (here the letters are ordered from the smallest time variable to the biggest, so $a_1$ encodes $f$, $a_2$ encodes $g$ and $a_3$ encodes $h$). They are used for describing iterated integrals over a simplex and decorated trees represent products of these integrals The map $ \Pi $ and $\Pi_A$ interpret the combinatorial objects, decorated trees and words, as integrals. One can write systematically \eqref{identity_two_letters} via an arborification type map $\mathfrak{a}$ a morphism between words and trees:
\begin{equs} \label{identity_two_letters_2}
	(\Pi T)(t) = (\Pi^A \mathfrak{a}(T))(t), \quad \mathfrak{a}(T) = a_1 a_2 a_3 + a_2 a_1 a_3.
\end{equs}
The arborification map is defined by induction by putting the root as the rightmost letter and shuffling its value on the various trees attached to the root. One has
\begin{equs}
	\mathfrak{a}(T) =  \left( \mathfrak{a}(\begin{tikzpicture}[scale=0.2,baseline=-5]
		\coordinate (root) at (0,-0.5);
		\node[var] (rootnode) at (root) {\tiny{$a_1 $}};
	\end{tikzpicture}) \shuffle \mathfrak{a}(\begin{tikzpicture}[scale=0.2,baseline=-5]
	\coordinate (root) at (0,-0.5);
	\node[var] (rootnode) at (root) {\tiny{$a_2 $}};
\end{tikzpicture}) \right)  a_3, \quad \mathfrak{a}(\begin{tikzpicture}[scale=0.2,baseline=-5]
	\coordinate (root) at (0,-0.5);
	\node[var] (rootnode) at (root) {\tiny{$a_i $}};
\end{tikzpicture}) = a_i, \quad a_1 \shuffle a_2 = a_1 a_2 + a_2 a_1.
\end{equs} 
The map could also be defined via the Butcher-Connes-Kreimer coproduct (\cite{Butcher,CK1,CK2}).
The arborification has been used \cite{Br24} for rewriting the Poincaré-Dulac normal form proposed in \cite{GKO13} for dispersive equations. It was introduced by Ecalle for the study of dynamical systems (see \cite{EV04,FM}). It appears in the context of numerical analysis in \cite{Murua2006}). It is also a natural map connecting geometric rough paths (\cite{Lyons98,Gub04}) and branched rough paths \cite{Gub06,HK15}. 

Our main result is to rewrite iterated integrals that appear in \cite{DH2301,BDNY24} via the arborification and the use of \eqref{key_identity_NLS} and \eqref{key_identity_Wave}. Below, we list our main propositions/theorems
\begin{itemize}
	\item Introduction of the decorated trees formalism: Definition~\ref{decorated_trees}.
	\item Key identities: Propositions \ref{covariance_2}, \ref{covariance}.
	\item Definition of the arborifications: \eqref{arborification_NLS}, \eqref{arborification_Wave}.
	\item Connection between $\Pi T$ and $\Pi^A \mathfrak{a}(T)$: Theorems \ref{main_theorem_NLS}, \ref{main_theorem_Wave}.
\end{itemize}

 One can summarise the main results in the following diagram
	\begin{equs}\label{diag_2} \begin{aligned}
			\xymatrix{ 
				\scriptsize{\text{Oscillatory integrals}} \ar[rr]^{\scriptsize{\text{Analysis}}} 	&& \scriptsize{\text{Cancellations}}
				\\ \scriptsize{\text{Decorated trees}}
				\ar[r]^{\mathfrak{a}} \ar[u]^{\Pi} & \scriptsize{\text{Words}}   \ar[ul]^{\Pi^A}  
				\ar[ur]^{\scriptsize{\text{Algebra}}}  
			}
		\end{aligned}
	\end{equs}
	Let us briefly explain the diagram above. One has a commutation property given by $ \Pi T = \Pi^A \mathfrak{a}(T) $ where $T$ is a decorated tree. The arborification map $ \mathfrak{a} $  reflects the analytical transformations given by Propositions \ref{covariance_2} and  \ref{covariance}.
	Then, one wants to compute the cancellation using only algebraic/combinatorial arguments that correspond to the analytical computations. This is the object of the second part of the diagram, where one wants to go from oscillatory integrals or words to the cancellations. The idea is to replace
	analytical operations, which quickly become intractable in very large oscillatory integrals,  with  algebraic manipulations. We show that they can be obtained by working with words which are a simpler combinatorial set in comparison to decorated trees. Let us stress that the second commutative part of the diagram cannot be phrased as a general theorem as it very much depends on the examples and in which order one applies the rules.

Then, from the representation with words, we show cancellations between different terms. One can notice that the arguments diverge at this point. For nonlinear Schrödinger Equation, one has to be in some frequency domain when one can switch some nodes. Also, some words will be small and negligible.  In Proposition \ref{example_computation_1}, we prove one of the simplest cancellations when one switches colors. The validity of this combinatorial approach is justified by \cite[Lemma 7.1, (b)]{DH2301}. 

 For wave equation, the computation relies on integration by parts as \eqref{key_identity_Wave} makes appear a derivative in time. In Proposition\ref{example_computation_2}, we compute the main cancellation of \cite{BDNY24} via the words formalism. One can easily track the different steps of the computation.
  These two examples allow us to state  a Metatheorem below that reflects our computations.
 \begin{meta} Cancellations for dispersive PDEs with random initial data could be understood via words and some well-chosen arborification map.
 	\end{meta}
 
 We have checked this Metatheorem on two dispersive equations for two different contexts. One can hope to see this formalism extremely useful for cancellations.
 For the moment, understanding the cancellations in a more conceptual way seems out of reach without the identification of some symmetries as in the context of parabolic SPDEs or some new arguments. The formalism of words and arborification could also be useful for understanding the recent combinatorics introduced for random tensors and in Wave turbulence.
 
Finally, let us outline the paper by summarising the content of its sections. In Section~\ref{Sec::2}, we focus on the cubic nonlinear Schrödinger Equation in the context of Wave turbulence. The aim of this section is to provide a framework for computing the cancellations observed in \cite{DH2301}. We start by introducing in Definition~\ref{decorated_trees} the formalism of decorated trees coming from \cite{BS} which was partially inspired by singular SPDEs (see \cite{reg,BHZ}). This formalism is used in both sections as it is quite generic and go beyond one single equation. We interpret these decorated trees as oscillatory iterated integrals in \eqref{decorated_trees} via a map $\Pi$. We extend this map in \eqref{general_pi_1} where the expectation is taken among pairs of random initial data. These are the objects of interest in \cite{DH2301} where one wants to compute the second moment of the Fourier coefficient of the solution. At the combinatorial side, we consider decorated trees with a pairing.  Then, we explain the first cancellation observed in  \cite{DH2301} by writing explicitly the iterated integrals and we start to propose a more general framework. It is built upon Proposition~\ref{covariance_2} that allows splitting one edge into two components with an expectation. 
These new expectations are of different nature from the other ones. Therefore, we encode them via a green color on the leaves (see \eqref{example_color}).
The main new formalism is words on an alphabet $A$ whose letters are given by \eqref{letters_1} and could also have green colors for the special pairing. These words are equipped with a shuffle product $ \shuffle $ (see \eqref{shuffle_def}) and an arborification map $\mathfrak{a}$ (see \eqref{arborification_NLS}) that rewrites a decorated tree into a linear combination of words from which it is easy to see the cancellations. This arborification implements in an abstract way an iteration of Proposition~\ref{covariance_2}. One can also define it recursively via a Butcher-Connes-Kreimer type coproduct in \eqref{arbo_new} coproduct that we denote $\Delta_{\text{\tiny{NLS}}}$. The analytical connection between decorated trees and words is made precise in Theorem \ref{main_theorem_NLS}. As a consequence, one can perform all the computations on words for computing the cancellations in \cite{DH2301}. The main arguments are switching colors due to frequencies that are very close (see \eqref{switching_nodes}) and checking that some words give a small contribution (see \eqref{small_word}). We start with the simplest cancellation in Proposition\ref{example_computation_1} and then we proceed with more complex cases.

In Section~\ref{Sec::3}, we use the same formalism of decorated trees and words but for Wave equation. The random initial data is replaced by the solution of the parabolic heat equation given by \eqref{parabolic_solution}. This induces a different splitting of the wave propagator in Proposition \ref{covariance} with a derivative in time. At the level of the letters, it is encoded via a green edge (see \eqref{letter_type}). The map $\Pi$ given in \eqref{general_pi_2} has a slightly different definition.
Then, one also rewrites the arborification in \eqref{arborification_Wave} inductively and also with a new coproduct in \eqref{arbo_new}. The main result is Theorem \ref{main_theorem_Wave}, a variant of the previous theorem adapted to the wave context. Then, we finish the section with Proposition \ref{example_computation_2} which computes the main cancellation of \cite{BDNY24}. These computations involve integration by parts identities for making some cancellation on the words appear. What remains is either small or removed by some renormalisation.

\subsection*{Acknowledgements}

{\small
	Y. B. gratefully acknowledges funding support from the European Research Council (ERC) through the ERC Starting Grant Low Regularity Dynamics via Decorated Trees (LoRDeT), grant agreement No.\ 101075208.
	Views and opinions expressed are however those of the author(s) only and do not necessarily reflect those of the European Union or the European Research Council. Neither the European Union nor the granting authority can be held responsible for them.
}

\section{Wave turbulence}
\label{Sec::2}
In this section, we analyse the cancellations obtained in \cite{DH23,DH2301}.
They consider the following Schrödinger equation:
\begin{equs} \label{equ_schrodinger}
	( \partial_t  - i \Delta ) u = i \mu^2 |u|^2 u, \quad u(0,x) = v(x).
\end{equs}
where $x \in \mathbb{T}^d_L = [0,L]^d$. The random initial data $ v $ is given by
\begin{equs}
	v(x) = \frac{1}{L^d} \sum_{k \in \mathbb{Z}^d_L} v_k e^{2 \pi ikx}, \quad v_{k} = \sqrt{w_k} \eta_{k}
\end{equs}
where $ \mathbb{Z}^d_L = (L^{-1} \mathbb{Z})^d $ and $ w : \mathbb{R}^d  \rightarrow [0, + \infty)$ is a given Schwartz function. The $\eta_k$ are i.i.d centred complex Gaussian random variables satisfying for $k, \ell \in \mathbb{Z}^d_L$
\begin{equs}
	\mathbb{E}(|\eta_k|^2) = 1, \quad 	\mathbb{E}(\eta_k \eta_{\ell}) = 0.
\end{equs}
Equation \eqref{equ_schrodinger} can be rewritten in Duhamel form as
\begin{equs}
	u(t) = e^{it \Delta} v  + i \mu^2\int_0^t e^{i(t-s) \Delta} |u(s)|^2 u(s)ds. 
	\end{equs}
In Fourier space, one has
\begin{equs}
	u_k(t) = e^{-it k^2} v_k  + i \mu^2 \sum_{k = -k_1 + k_2 + k_3} \int_0^t e^{-i(t-s) k^2} \bar{u}_{k_1}(s) u_{k_2}(s) u_{k_3}(s) ds 
\end{equs}
where pointwise product in physical space $ |u|^2 u $ is sent to convolution product in Fourier space. Moreover, $ e^{i t \Delta} $ is sent to $ e^{-itk^2} $. We  want to encode the approximation of $ u_k $ up to order $r$ in time via Duhamel iterations. For this, we need to introduce decorated trees for encoding the various iterated integrals. 

\begin{definition}		\label{decorated_trees}
	We define the set of decorated trees $\mcT$ as elements of the form $T_\mfe^{\mff}$ where 	
	\begin{itemize}	
		\item $ T $ is a non-planar rooted tree (a rooted tree where the order of node's children
			does not matter) with root node $ \varrho_T $, edge set $ E_T $, node set $ N_T $ and leaves set $L_T$. 	
		\item $ \mfe:E_T\rightarrow \{ \mathfrak{t}_1, \mathfrak{t}_2 \} \times \{ 0,1 \} $  are edge decorations. We suppose that all the edges of $ T_\mfe^{\mff} $ connected to leaves are decorated by $ (\mathfrak{t_1},p) $ with $p \in \lbrace 0,1 \rbrace$ and that all the other edges are decorated by $ (\mathfrak{t_2},p) $.
		\item $ \mff:N_T\setminus\{\varrho_T\}\rightarrow \mathbb{Z}^d $ are node decorations  satisfying the relation for every inner node $ u $
		\begin{equs} \label{frequencies_identity}
		(-1)^{\mathfrak{c}(e_u)}	\mff(u)=\sum_{e = (u,v)\in E_T}  	(-1)^{\mathfrak{c}(e)} \mff(v)
		\end{equs}
		where $ \mathfrak{e}(e) = (\mathfrak{t}(e),\mathfrak{c}(e)) $, $ e_u$ is the edge outgoing $u$ of the form $ (w,u) $. Here, by outgoing we mean that $w$ is the parent of $u$, the closest node to $u$ lying on the path to the root of the tree. From this definition, one
		can see that the node decorations  $(\mff(u))_{u \in L_T}$ determine the decorations of the inner nodes.
		We suppose that the root $ \varrho_T $ is not decorated.
	\end{itemize}
\end{definition}

  One difference from the decorated trees used in \cite{DH2301,DH2311}, is that the decorated  trees given in Definition \ref{decorated_trees} are not planar. Planarity is used for encoding the complex conjugate and in our case the edge decoration in $ \{ 0,1 \} $ allows us to encode it. We do not put decorations at the root as we work with planted trees and in the recursive analytical interpretation of this object given by $\Pi$ in \eqref{Pi_def}, a decoration at the root is unnecessary.

We define $ \CH $ as the linear span of $\CT$ and by $\CF$ the linear span of forests formed of decorated trees in $\CT$. These decorated trees and forests are quite generic and can be used for various dispersive PDEs as we use them for two different applications in the present paper. There is no restriction on the number of edges connected to a node. In practice, one always works with a small subclass of these decorated trees, those which could be associated with iterated integrals coming from the Duhamel formulation. 
This was first introduced in \cite[Section 4.1]{BS}.
 The empty tree is denoted by $ \one $.
 One can use a symbolic notation for these trees described below. An  edge decorated by $ o \in \{ \mathfrak{t}_1, \mathfrak{t}_2 \} \times \{ 0,1 \} $ is denoted by $ \mathcal{I}_{o} $. The symbol $ \mathcal{I}_{o} (\lambda_{k}
\cdot) : \CH \rightarrow \CH $ is viewed as the operation that grafts the root of a tree to a new root via an edge decorated by $ o $. The symbol $ \lambda_k $ is to stress that the old root will be decorated by $k$.  If the condition \eqref{frequencies_identity} is not
satisfied on the argument, then $ \mathcal{I}_{o} (\lambda_{k}
\cdot) $  gives zero.
 One important product on decorated trees is the tree product between two decorated trees $T_1$ and $T_2$ that we denote by $ T_1T_2 $. It  consists into merging the roots of the two trees. Below, we give an example
\begin{equs}
T_1 =  \mathcal{I}_{(\mathfrak{t}_1,1)}(\lambda_{k_1}) = 	\begin{tikzpicture}[scale=0.2,baseline=-5]
		\coordinate (root) at (0,2);
		\coordinate (tri) at (0,-1);
		\draw[kernels2,tinydots] (tri) -- (root);
		\node[var] (rootnode) at (root) {\tiny{$ k_1 $}};
		\node[not] (trinode) at (tri) {};
	\end{tikzpicture}, \quad T_2 =  \mathcal{I}_{(\mathfrak{t}_1,0)}(\lambda_{k_2}) = 	\begin{tikzpicture}[scale=0.2,baseline=-5]
	\coordinate (root) at (0,2);
	\coordinate (tri) at (0,-1);
	\draw[kernels2] (tri) -- (root);
	\node[var] (rootnode) at (root) {\tiny{$ k_2 $}};
	\node[not] (trinode) at (tri) {};
\end{tikzpicture}, \quad T_3 =  \mathcal{I}_{(\mathfrak{t}_1,0)}(\lambda_{k_3}) = 	\begin{tikzpicture}[scale=0.2,baseline=-5]
\coordinate (root) at (0,2);
\coordinate (tri) at (0,-1);
\draw[kernels2] (tri) -- (root);
\node[var] (rootnode) at (root) {\tiny{$ k_3 $}};
\node[not] (trinode) at (tri) {};
\end{tikzpicture}
\\ T_1T_2T_3 = \CI_{(\mathfrak{t}_1,1)}(\lambda_{k_1})\CI_{(\mathfrak{t}_1,0)}(\lambda_{k_2})\CI_{(\mathfrak{t}_1,0)}(\lambda_{k_3}) = \begin{tikzpicture}[scale=0.2,baseline=-5]
	\coordinate (root) at (0,-1);
	\coordinate (right) at (2,2);
	\coordinate (center) at (0,2);
	\coordinate (left) at (-2,2);
	\draw[kernels2] (right) -- (root);
	\draw[kernels2,tinydots] (left) -- (root);
	\draw[kernels2] (center) -- (root);
	\node[not] (rootnode) at (root) {};
	\node[var] (rootnode) at (right) {\tiny{$ k_{\tiny{3}} $}};
	\node[var] (rootnode) at (center) {\tiny{$ k_{\tiny{2}} $}};
	\node[var] (rootnode) at (left) {\tiny{$ k_{\tiny{1}} $}};
\end{tikzpicture}.
\end{equs}
We have used brown edges for denoting edges decorated by $ (\mathfrak{t_1},0) $ and brown dotted edges for $ (\mathfrak{t_1},1) $. If we replace $\mathfrak{t}_1$ by $ \mathfrak{t}_2 $ then the edges will be blue. 
One has
\begin{equs} 
	T_4 =  \begin{tikzpicture}[scale=0.2,baseline=-5]
		\coordinate (root) at (0,0);
		\coordinate (tri) at (0,-2);
		\coordinate (t1) at (-2,2);
		\coordinate (t2) at (2,2);
		\coordinate (t3) at (0,3);
		\draw[kernels2,tinydots] (t1) -- (root);
		\draw[kernels2] (t2) -- (root);
		\draw[kernels2] (t3) -- (root);
		\draw[symbols] (root) -- (tri);
		\node[not] (rootnode) at (root) {};t
		\node[not] (trinode) at (tri) {};
		\node[var] (rootnode) at (t1) {\tiny{$ k_{\tiny{1}} $}};
		\node[var] (rootnode) at (t3) {\tiny{$ k_{\tiny{2}} $}};
		\node[var] (trinode) at (t2) {\tiny{$ k_3 $}};
	\end{tikzpicture}
	=	  \CI_{(\mathfrak{t}_2,0)}(\lambda_{k}\CI_{(\mathfrak{t}_1,1)}(\lambda_{k_1})\CI_{(\mathfrak{t}_1,0)}(\lambda_{k_2})\CI_{(\mathfrak{t}_1,0)}(\lambda_{k_3}))
\end{equs}
where $ k = -k_1 + k_2 + k_3 $. One can compute iterated integrals associated to these decorated trees via a map $\Pi$ defined recursively below:
\begin{equs}
	\label{Pi_def}
	\begin{aligned}
	(\Pi \CI_{(\mathfrak{t}_1,0)}(\lambda_k ))(t) & = e^{-itk^2} \eta_k \sqrt{w_k}, \quad (\Pi \CI_{(\mathfrak{t}_1,1)}(\lambda_k ))(t) = \overline{(\Pi \CI_{(\mathfrak{t}_1,0)}(\lambda_k ))(t)},
	\\ (\Pi \CI_{(\mathfrak{t}_2,p)}(\lambda_k T))(t) & =(-1)^{p} i \mu^2\int_{0}^t e^{ (-1)^{p}i (s-t) k^2}
	(\Pi T)(s) ds, \\ (\Pi T_1 T_2)(t) & = (\Pi T_1 )(t) (\Pi  T_2)(t).
	\end{aligned}
\end{equs}
By definition, the map $\Pi$ is a character sending the tree product to the pointwise product.
As an example, one has 
\begin{equs}
	(\Pi T_4)(t) = i \mu^2  \bar{\eta}_{k_1} \sqrt{w_{k_1}} \prod_{j=2}^3
	\eta_{k_j} \sqrt{w_{k_j}} \int_{0}^t e^{ -i(t-s)k^2}
	\left(   e^{is (k_1^2 -k_2^2 - k_3^2)} \right) ds.
\end{equs}
If one iterates the Duhamel formulation in Fourier space, one produces a tree series where iterated integrals of the form $(
\Pi T)(t)$ appear. By truncating this expansion according to the number of blue edges which corresponds to time integrations, one gets the first step of the derivation of the low regularity schemes described in \cite[Sec. 4]{BS}.

We now assume that some of the leaves of a decorated tree come in pairs. This can be described as a subset of the pairs of terminal edges (edges connected to a leaf) in a given tree. We denote this subset as $\mathfrak{p}$ for a given decorated tree $ T_{\mathfrak{e}}^{\mff} $. We use the short hand notation: $ T_{\mathfrak{e},\mathfrak{p}}^{\mff}  $ for encoding this extra data. A pairing imposes that the leaves decorations are the  same. For example, below the leaves paired have the decorations $k_1$ and $\ell_1$:
\begin{equs}
	T_5 = \begin{tikzpicture}[scale=0.2,baseline=-5]
		\coordinate (root) at (0,-1);
		\coordinate (right) at (2,2);
		\coordinate (center) at (0,2);
		\coordinate (left) at (-2,2);
		\coordinate (leftr) at (0,5);
		\coordinate (leftc) at (-2,5);
		\coordinate (leftl) at (-4,5);
		\draw[kernels2] (right) -- (root);
		\draw[symbols,tinydots] (left) -- (root);
		\draw[kernels2] (center) -- (root);
		\draw[kernels2,tinydots] (leftc) -- (left);
		\draw[kernels2,tinydots] (leftr) -- (left);
		\draw[kernels2] (leftl) -- (left);
		\node[not] (rootnode) at (root) {};
		\node[not] (rootnode) at (left) {};
		\node[var] (rootnode) at (right) {\tiny{$ k_{\tiny{2}} $}};
		\node[var] (rootnode) at (center) {\tiny{$ k_{\tiny{1}} $}};
		\node[var] (rootnode) at (leftr) {\tiny{$ k_{\tiny{1}} $}};
		\node[var] (rootnode) at (leftl) {\tiny{$ k_{\tiny{4}} $}};
		\node[var] (trinode) at (leftc) {\tiny{$ k_5 $}};
	\end{tikzpicture}, \quad T_6 = \begin{tikzpicture}[scale=0.2,baseline=-5]
		\coordinate (root) at (0,-1);
		\coordinate (right) at (2,2);
		\coordinate (center) at (0,2);
		\coordinate (left) at (-2,2);
		\coordinate (centerr) at (2,5);
		\coordinate (centerc) at (0,5);
		\coordinate (centerl) at (-2,5);
		\draw[kernels2] (right) -- (root);
		\draw[kernels2,tinydots] (left) -- (root);
		\draw[symbols] (center) -- (root);
		\draw[kernels2] (centerc) -- (center);
		\draw[kernels2] (centerr) -- (center);
		\draw[kernels2,tinydots] (centerl) -- (center);
		\node[not] (rootnode) at (root) {};
		\node[not] (rootnode) at (center) {};
		\node[var] (rootnode) at (right) {\tiny{$ k_{\tiny{2}} $}};
		\node[var] (rootnode) at (left) {\tiny{$ \ell_{\tiny{1}} $}};
		\node[var] (rootnode) at (centerr) {\tiny{$\ell_{\tiny{1}} $}};
		\node[var] (rootnode) at (centerl) {\tiny{$ k_{\tiny{5}} $}};
		\node[var] (trinode) at (centerc) {\tiny{$ k_4 $}};
	\end{tikzpicture}
\end{equs}
where $-\ell_1 = -k_1-k_5 + k_4$. Now, the pairs have to be interpreted as covariances between two random initial data. They are coming from the computation of moments of these integrals. In Wave turbulence, the quantity of interest is the second moment of the Fourier coefficient of the solution: $\mathbb{E}(|u_k|^2)$.
Indeed, the previous iterated integrals can be viewed as multilinear maps in the Gaussian initial data $\eta_k$. We recall the formula for computing expectations of the product of random Gaussian variables which is called  Isserles' theorem.
Let $ I $ be a finite set and $ (X_{i})_{i \in I} $ a collection of centred jointly Gaussian random variables. Then 
\begin{equs} \label{wick}
	\mathbb{E}\left(\prod_{i \in I} X_i \right) = \sum_{\mathfrak{p} \in \mathcal{P}(I)} \prod_{\lbrace i,j \rbrace \in \mathfrak{p}} \mathbb{E}(X_i X_j)
\end{equs}
where $ \mathcal{P}(I) $ are partitions of $ I $ with two elements of $I$ in each block of the partition. The application of this formula to $\mathbb{E}(|u_k|^2)$ produces the pairings on the leaves.
We have chosen to consider a slightly more general framework where not all the leaves are paired but it contains the case of the full pairing of interest in \cite{DH2301}. For interpreting $ T_{\mathfrak{e},\mathfrak{p}}^{\mff}  $, one cannot use a recursive formula but has to proceed with an explicit one:
\begin{equs} \label{general_pi_1}
	\begin{aligned}
	(\Pi T_{\mathfrak{e},\mathfrak{p}}^{\mff})(t)  & =  \int  \prod_{e \in E_T^2} \one_{\lbrace  0<t_{e_-} < t_{e_+} \rbrace} (-1)^{\mathfrak{c}(e)} i \mu^2 \left( \exp((-1)^{\mathfrak{c}(e)}i(t_{e_-}-t_{e_+}) \mff(e_-)^2) \right)  \\ & \prod_{z \in E_T^{\mathfrak{p}}} \xi_{\mff(z_-)}^{\mathfrak{c}(z)}(t_{z_+})  \prod_{\lbrace x,y \rbrace \in \mathfrak{p}} 	\mathbb{E}( \xi_{\mff(x_-)}^{\mathfrak{c}(x)}(t_{x_+}) \xi_{\mff(y_-)}^{\mathfrak{c}(y)}(t_{y_+}) ) \prod_{e \in E_T^2} dt_{e_-}
	\end{aligned}
\end{equs}
where for $ e  \in E_T $, one has  $e = (e_+,e_-)  $, if $e_+ = \varrho_T$ then $t_{e_+} = t$. The set $ E_T^2 $ are the edges $e$ of $T_{\mfe,\mathfrak{p}}^{\mff}$ such that $ \mfe(e) = \mathfrak{t}_2 $ and $ E_T^{\mathfrak{p}} $ corresponds to the terminal edges that are not in $ \mathfrak{p} $.
One has
\begin{equs}
	\xi^0_{k}(t) = e^{-i t k^2} \eta_k \sqrt{w_k}, \quad \xi^1_{k}(t) = \overline{\xi^0_{k}(t)}.
\end{equs}
 Below, we illustrate this definition with two examples:
\begin{equs}
	(\Pi T_5)(t) &=\eta_{k_2}\eta_{k_4} \bar{\eta}_{k_5}  w_{k_1} \sqrt{w_{k_2}}  \sqrt{w_{k_4}}  \sqrt{w_{k_5}}
	\\ & (-i) \mu^2 e^{- it (k_1^2+k_2^2)   }  \int_0^t e^{ i(s-t) \ell_1^2}   e^{-i s(k_1^2+k_5^2-k_4^2)} ds, 
\\
	(\Pi T_6)(t) &=\eta_{k_2}\eta_{k_4} \bar{\eta}_{k_5}  w_{\ell_1} \sqrt{w_{k_2}}  \sqrt{w_{k_4}}  \sqrt{w_{k_5}}
	\\ & i \mu^2 e^{- it (\ell_1^2+k_2^2)   }  \int_0^t e^{ i(s-t) k_1^2}   e^{-i s(\ell_1^2+k_5^2-k_4^2)} ds.
\end{equs}
If we suppose that $ |k_1-\ell_1| \leq L^{-1} $, one can make the following identification up to a small error:
\begin{equs}
	w_{\ell_1} \approx w_{k_1}.
\end{equs}
One can notice from the explicit expression of the iterated integrals described above the following cancellation:
\begin{equs}
	\Pi(T_5 + T_6)  \approx 0.
\end{equs}
This is exactly the first of the three (families of) ``miraculous cancellations" appearing in \cite{DH2301}.
We want to derive a combinatorial formalism that explains this and the other cancellations. We first start with  a simple observation that rewrites any internal edge of the previous tree as a covariance between two edges.
\begin{proposition} \label{covariance_2} One has
	\begin{equs} \label{identity_key}
		e^{i(s-t) k^2} =  	\mathbb{E}(e^{-it k^2}\eta_{k}  \overline{e^{-is k^2}\eta_{k}} ).
	\end{equs}
\end{proposition}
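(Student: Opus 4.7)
The plan is to verify the identity by a direct computation, exploiting only two facts that were already stated in the setup of the random initial data: the deterministic exponential prefactors can be pulled out of the expectation, and the normalization $\mathbb{E}(|\eta_k|^2)=1$ fixes the remaining Gaussian moment.

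First, I would rewrite the conjugate on the right hand side: since $e^{-isk^2}$ is a scalar, $\overline{e^{-isk^2}\eta_k} = e^{isk^2}\overline{\eta_k}$. By linearity of expectation, the two deterministic exponentials factor out, leaving
\begin{equs}
\mathbb{E}(e^{-itk^2}\eta_k\,\overline{e^{-isk^2}\eta_k}) = e^{-itk^2}e^{isk^2}\,\mathbb{E}(\eta_k\overline{\eta_k}) = e^{i(s-t)k^2}\,\mathbb{E}(|\eta_k|^2).
\end{equs}

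Second, I would invoke the normalization $\mathbb{E}(|\eta_k|^2)=1$ from the definition of the initial data to conclude. There is essentially no obstacle here; the only subtlety worth stressing is the role of the complex conjugation, which ensures that the phase $e^{-isk^2}$ becomes $e^{+isk^2}$ and combines with $e^{-itk^2}$ to produce exactly the propagator $e^{i(s-t)k^2}$. The identity $\mathbb{E}(\eta_k\eta_k)=0$ for complex Gaussians is what makes the conjugated version the natural choice: without the conjugate, the right hand side would vanish rather than reproduce the propagator. This asymmetry is precisely what will later be encoded at the combinatorial level by the two values $p\in\{0,1\}$ of the edge decoration $\mathfrak{c}(e)$.
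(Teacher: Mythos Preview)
Your proof is correct and is essentially the same computation as the paper's: factor out the deterministic phases, then use $\mathbb{E}(|\eta_k|^2)=1$. The additional remark about the role of the conjugation and the vanishing of $\mathbb{E}(\eta_k\eta_k)$ is a helpful observation but goes beyond what the paper's one-line argument records.
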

\begin{proof}
	It is an immediate consequence of the definition of the noises $\eta_k$. Indeed, one has
	\begin{equs}
		\mathbb{E}(e^{-it k^2}\eta_{k} \overline{e^{-is k^2}\eta_{k}} ) = e^{i(s-t) k^2} \mathbb{E}(\overline{\eta_k} \eta_k) = e^{i(s-t) k^2}.
	\end{equs}
\end{proof}
In the sequel, we use the following notations:
\begin{equs}
	\hat{\xi}_{k}(t) =	\hat{\xi}_{k}^0(t) = e^{-i t k^2} \eta_k, \quad \hat{\xi}_{k}^1(t) = \overline{\hat{\xi}_{k}^0(t)}.
\end{equs}
The difference between $ \hat{\xi}_{k} $ and $ \xi_{k} $ is the factor $ \sqrt{w_k} $ which is missing.
We want to work with a set of decorated trees that can encode the fact that some of the terminal edges do not contain a factor $ \sqrt{w_k} $ after evaluating with a map $\Pi$. For this, we suppose that given a decorated tree $ T_{\mathfrak{e},\mathfrak{p}}^{\mff} $, one has $\mathfrak{p} = \mathfrak{p}_1 \cup \mathfrak{p}_2 $ where $\mathfrak{p}_1$ will correspond to the pairs that do not contain any $\sqrt{w_k}$. In the symbolic notation, we replace $\CI_{(\mathfrak{t}_1,p)}$ by $\hat{\CI}_{(\mathfrak{t}_1,p)}$. Pictorially, we color the leaves attached to these edges in green. As an example, one has
\begin{equs} \label{example_color} \begin{aligned}
	T_7 & = \hat{\CI}_{(\mathfrak{t}_1,0)}(\lambda_{k_1})\CI_{(\mathfrak{t}_1,0)}(\lambda_{k_2})  \CI_{(\mathfrak{t}_2,1)}( \CI_{(\mathfrak{t}_1,0)}(\lambda_{k_4}) \CI_{(\mathfrak{t}_1,1)}(\lambda_{k_5}) 
	\hat{\CI}_{(\mathfrak{t}_1,1)}(\lambda_{k_1})     )\\  &=  \begin{tikzpicture}[scale=0.2,baseline=-5]
		\coordinate (root) at (0,-1);
		\coordinate (right) at (2,2);
		\coordinate (center) at (0,2);
		\coordinate (left) at (-2,2);
		\coordinate (leftr) at (0,5);
		\coordinate (leftc) at (-2,5);
		\coordinate (leftl) at (-4,5);
		\draw[kernels2] (right) -- (root);
		\draw[symbols,tinydots] (left) -- (root);
		\draw[kernels2] (center) -- (root);
		\draw[kernels2,tinydots] (leftc) -- (left);
		\draw[kernels2,tinydots] (leftr) -- (left);
		\draw[kernels2] (leftl) -- (left);
		\node[not] (rootnode) at (root) {};
		\node[not] (rootnode) at (left) {};
		\node[var] (rootnode) at (right) {\tiny{$ k_{\tiny{2}} $}};
		\node[var2] (rootnode) at (center) {\tiny{$ k_{\tiny{1}} $}};
		\node[var2] (rootnode) at (leftr) {\tiny{$ k_{\tiny{1}} $}};
		\node[var] (rootnode) at (leftl) {\tiny{$ k_{\tiny{4}} $}};
		\node[var] (trinode) at (leftc) {\tiny{$ k_5 $}};
	\end{tikzpicture}
\end{aligned}
\end{equs}
and
\begin{equs}
		(\Pi T_7)(t) &=\eta_{k_2}\eta_{k_4} \bar{\eta}_{k_5}  \sqrt{w_{k_2}}  \sqrt{w_{k_4}}  \sqrt{w_{k_5}}
		\\ &  (-i) \mu^2 e^{- it (k_1^2+k_2^2)   }  \int_0^t e^{ i(s-t) \ell_1^2}   e^{-i s(k_1^2+k_5^2-k_4^2)} ds. 
	\end{equs}
Before applying Proposition \ref{covariance_2} on all the internal edges, we need to introduce an extra structure. We consider words on an alphabet $A$ whose letters are given by:
\begin{equs} \label{letters_1}
	\begin{tikzpicture}[scale=0.2,baseline=-5]
		\coordinate (root) at (0,2);
		\coordinate (tri) at (0,-1);
		\draw[kernels2] (tri) -- (root);
		\node[var] (rootnode) at (root) {\tiny{$ k_1 $}};
		\node[not] (trinode) at (tri) {};
	\end{tikzpicture}, \quad \begin{tikzpicture}[scale=0.2,baseline=-5]
	\coordinate (root) at (0,2);
	\coordinate (tri) at (0,-1);
	\draw[kernels2,tinydots] (tri) -- (root);
	\node[var] (rootnode) at (root) {\tiny{$ k_1 $}};
	\node[not] (trinode) at (tri) {};
\end{tikzpicture}, \quad  \begin{tikzpicture}[scale=0.2,baseline=-5]
\coordinate (root) at (0,-1);
\coordinate (right) at (2,2);
\coordinate (center) at (0,2);
\coordinate (left) at (-2,2);
\draw[kernels2] (right) -- (root);
\draw[kernels2,tinydots] (left) -- (root);
\draw[kernels2] (center) -- (root);
\node[not] (rootnode) at (root) {};
\node[var] (rootnode) at (right) {\tiny{$ \ell_{\tiny{3}} $}};
\node[var] (rootnode) at (center) {\tiny{$ \ell_{\tiny{2}} $}};
\node[var] (rootnode) at (left) {\tiny{$ \ell_{\tiny{1}} $}};
\end{tikzpicture},  \quad \begin{tikzpicture}[scale=0.2,baseline=-5]
\coordinate (root) at (0,-1);
\coordinate (rightc) at (1,2);
\coordinate (right) at (3,2);
\coordinate (leftc) at (-1,2);
\coordinate (left) at (-3,2);
\draw[kernels2,tinydots] (right) -- (root);
\draw[kernels2] (left) -- (root);
\draw[kernels2,tinydots] (rightc) -- (root);
\draw[kernels2] (leftc) -- (root);
\node[not] (rootnode) at (root) {};
\node[var] (rootnode) at (left) {\tiny{$ \ell_{\tiny{1}} $}};
\node[var] (rootnode) at (leftc) {\tiny{$ \ell_{\tiny{2}} $}};
\node[var] (trinode) at (rightc) {\tiny{$ \ell_{\tiny{3}} $}};
\node[var] (trinode) at (right) {\tiny{$ \ell_{\tiny{4}} $}};
\end{tikzpicture}.
\end{equs}
where for the last letter, one must have
\begin{equs}
	\ell_1 + \ell_2 -\ell_3 - \ell_4 = 0.
\end{equs}
They are also in $A$  letters where some leaves have been colored in green. We consider the words on this alphabet  that we denote by $T(A)$. We keep only the words that are compatible with a pairing on the leaves. We denote this space by $\mathcal{W}_A$. Below, we provide an example of such a word:
\begin{equs}
\begin{tikzpicture}[scale=0.2,baseline=-5]
		\coordinate (root) at (0,-1);
		\coordinate (rightc) at (1,2);
		\coordinate (right) at (3,2);
		\coordinate (leftc) at (-1,2);
		\coordinate (left) at (-3,2);
		\draw[kernels2,tinydots] (right) -- (root);
		\draw[kernels2] (left) -- (root);
		\draw[kernels2,tinydots] (rightc) -- (root);
		\draw[kernels2] (leftc) -- (root);
		\node[not] (rootnode) at (root) {};
		\node[var2] (rootnode) at (left) {\tiny{$ \ell_{\tiny{1}} $}};
		\node[var] (rootnode) at (leftc) {\tiny{$ k_{\tiny{4}} $}};
		\node[var] (trinode) at (rightc) {\tiny{$ k_{\tiny{5}} $}};
		\node[var] (trinode) at (right) {\tiny{$ k_{\tiny{1}} $}};
	\end{tikzpicture} \, \, \,	\begin{tikzpicture}[scale=0.2,baseline=-5]
		\coordinate (root) at (0,-1);
		\coordinate (right) at (2,2);
		\coordinate (center) at (0,2);
		\coordinate (left) at (-2,2);
		\draw[kernels2] (right) -- (root);
		\draw[kernels2,tinydots] (left) -- (root);
		\draw[kernels2] (center) -- (root);
		\node[not] (rootnode) at (root) {};
		\node[var] (rootnode) at (right) {\tiny{$ k_{\tiny{2}} $}};
		\node[var] (rootnode) at (center) {\tiny{$ k_{\tiny{1}} $}};
		\node[var2] (rootnode) at (left) {\tiny{$ \ell_{\tiny{1}} $}};
	\end{tikzpicture}
\end{equs}
with $ \ell_1 + k_4 -k_5 - k_1 =0 $. We introduce a product $ \shuffle $ on $T(A)$ called shuffle product. It is given  	inductively for two words $au$ and $ bv $ with $a,b \in A$ by:
\begin{equation}\label{shuffle_def}
\begin{aligned}
		&au \shuffle bv = a ( u \shuffle bv ) + b(au \shuffle v), \\
		&a\shuffle \one = \one \shuffle a = a.
\end{aligned}
\end{equation}
Here, $ \one $ denotes the empty word, the neutral for $ \shuffle $.
Now, we define a natural morphism between $ (\CH,\cdot_F) $ and $ (T(A), \shuffle) $ that sends the forest product $ \cdot_F $ to the shuffle product. 
 It is given inductively on the symbolic notation by
\begin{equs} \label{arborification_NLS}
	\begin{aligned}
\ &	\mathfrak{a}( \prod_{j=1}^n \CI_{(\mathfrak{t}_1,p_j)}(\lambda_{\ell_j})  \prod_{r=1}^m \CI_{(\mathfrak{t}_2,q_r)}(\lambda_{k_r} \tau_r))  
	\\ &=  ( \prod_{r=1}^m(-1)^{q_r} i) \left( \mathfrak{a}(\hat{\CI}_{(\mathfrak{t}_1,\bar{q}_1)}(\lambda_{k_1})\tau_1) \shuffle \cdots \shuffle \mathfrak{a}(\hat{\CI}_{(\mathfrak{t}_1,\bar{q}_m)}(\lambda_{k_m})\tau_m) \right)
	\\&\phantom{=\ \ } \prod_{j=1}^n \CI_{(\mathfrak{t}_1,p_j)}(\lambda_{\ell_j})  \prod_{r=1}^m \hat{\CI}_{(\mathfrak{t}_1,q_r)}(\lambda_{k_r} \tau_r)
	\end{aligned}
\end{equs} 
where $ \bar{q}_r = 1 $ if $q_r = 0$ and $0$ otherwise. Let us comment briefly on this recursive formula. The rightmost letter on the third line corresponds to the terminal edges attached to the root of the tree. One can observe that we have added terminal edges for each other edges connected to the root. They also appear as a factor inside the recursion. In the end, we have transformed edges of type $ \mathfrak{t}_2 $ into two edges of type $ \mathfrak{t}_1 $. This is a combinatorial version of a  repeated application of Proposition~\ref{covariance_2}.
Using this transformation, one can have the following words:
\begin{equs}
	\mathfrak{a} (\begin{tikzpicture}[scale=0.2,baseline=-5]
		\coordinate (root) at (0,-1);
		\coordinate (right) at (2,2);
		\coordinate (center) at (0,2);
		\coordinate (left) at (-2,2);
		\coordinate (leftr) at (0,5);
		\coordinate (leftc) at (-2,5);
		\coordinate (leftl) at (-4,5);
		\draw[kernels2] (right) -- (root);
		\draw[symbols,tinydots] (left) -- (root);
		\draw[kernels2] (center) -- (root);
		\draw[kernels2,tinydots] (leftc) -- (left);
		\draw[kernels2,tinydots] (leftr) -- (left);
		\draw[kernels2] (leftl) -- (left);
		\node[not] (rootnode) at (root) {};
		\node[not] (rootnode) at (left) {};
		\node[var] (rootnode) at (right) {\tiny{$ k_{\tiny{2}} $}};
		\node[var] (rootnode) at (center) {\tiny{$ k_{\tiny{1}} $}};
		\node[var] (rootnode) at (leftr) {\tiny{$ k_{\tiny{1}} $}};
		\node[var] (rootnode) at (leftl) {\tiny{$ k_{\tiny{4}} $}};
		\node[var] (trinode) at (leftc) {\tiny{$ k_5 $}};
	\end{tikzpicture})  & = - i  \begin{tikzpicture}[scale=0.2,baseline=-5]
		\coordinate (root) at (0,-1);
		\coordinate (rightc) at (1,2);
		\coordinate (right) at (3,2);
		\coordinate (leftc) at (-1,2);
		\coordinate (left) at (-3,2);
		\draw[kernels2,tinydots] (right) -- (root);
		\draw[kernels2] (left) -- (root);
		\draw[kernels2,tinydots] (rightc) -- (root);
		\draw[kernels2] (leftc) -- (root);
		\node[not] (rootnode) at (root) {};
		\node[var2] (rootnode) at (left) {\tiny{$ \ell_{\tiny{1}} $}};
		\node[var] (rootnode) at (leftc) {\tiny{$ k_{\tiny{4}} $}};
		\node[var] (trinode) at (rightc) {\tiny{$ k_{\tiny{5}} $}};
		\node[var] (trinode) at (right) {\tiny{$ k_{\tiny{1}} $}};
	\end{tikzpicture} \, \, \,	\begin{tikzpicture}[scale=0.2,baseline=-5]
		\coordinate (root) at (0,-1);
		\coordinate (right) at (2,2);
		\coordinate (center) at (0,2);
		\coordinate (left) at (-2,2);
		\draw[kernels2] (right) -- (root);
		\draw[kernels2,tinydots] (left) -- (root);
		\draw[kernels2] (center) -- (root);
		\node[not] (rootnode) at (root) {};
		\node[var] (rootnode) at (right) {\tiny{$ k_{\tiny{2}} $}};
		\node[var] (rootnode) at (center) {\tiny{$ k_{\tiny{1}} $}};
		\node[var2] (rootnode) at (left) {\tiny{$ \ell_{\tiny{1}} $}};
	\end{tikzpicture},
	\quad
	\mathfrak{a}(\begin{tikzpicture}[scale=0.2,baseline=-5]
		\coordinate (root) at (0,-1);
		\coordinate (right) at (2,2);
		\coordinate (center) at (0,2);
		\coordinate (left) at (-2,2);
		\coordinate (centerr) at (2,5);
		\coordinate (centerc) at (0,5);
		\coordinate (centerl) at (-2,5);
		\draw[kernels2] (right) -- (root);
		\draw[kernels2,tinydots] (left) -- (root);
		\draw[symbols] (center) -- (root);
		\draw[kernels2] (centerc) -- (center);
		\draw[kernels2] (centerr) -- (center);
		\draw[kernels2,tinydots] (centerl) -- (center);
		\node[not] (rootnode) at (root) {};
		\node[not] (rootnode) at (center) {};
		\node[var] (rootnode) at (right) {\tiny{$ k_{\tiny{2}} $}};
		\node[var] (rootnode) at (left) {\tiny{$ \ell_{\tiny{1}} $}};
		\node[var] (rootnode) at (centerr) {\tiny{$ \ell_{\tiny{1}} $}};
		\node[var] (rootnode) at (centerl) {\tiny{$ k_{\tiny{5}} $}};
		\node[var] (trinode) at (centerc) {\tiny{$ k_4 $}};
	\end{tikzpicture})  = i  \begin{tikzpicture}[scale=0.2,baseline=-5]
		\coordinate (root) at (0,-1);
		\coordinate (rightc) at (1,2);
		\coordinate (right) at (3,2);
		\coordinate (leftc) at (-1,2);
		\coordinate (left) at (-3,2);
		\draw[kernels2,tinydots] (right) -- (root);
		\draw[kernels2] (left) -- (root);
		\draw[kernels2,tinydots] (rightc) -- (root);
		\draw[kernels2] (leftc) -- (root);
		\node[not] (rootnode) at (root) {};
		\node[var] (rootnode) at (left) {\tiny{$ \ell_{\tiny{1}} $}};
		\node[var] (rootnode) at (leftc) {\tiny{$ k_{\tiny{4}} $}};
		\node[var] (trinode) at (rightc) {\tiny{$ k_{\tiny{5}} $}};
		\node[var2] (trinode) at (right) {\tiny{$ k_{\tiny{1}} $}};
	\end{tikzpicture} \, \, \,	\begin{tikzpicture}[scale=0.2,baseline=-5]
		\coordinate (root) at (0,-1);
		\coordinate (right) at (2,2);
		\coordinate (center) at (0,2);
		\coordinate (left) at (-2,2);
		\draw[kernels2] (right) -- (root);
		\draw[kernels2,tinydots] (left) -- (root);
		\draw[kernels2] (center) -- (root);
		\node[not] (rootnode) at (root) {};
		\node[var] (rootnode) at (right) {\tiny{$ k_{\tiny{2}} $}};
		\node[var2] (rootnode) at (center) {\tiny{$ k_{\tiny{1}} $}};
		\node[var] (rootnode) at (left) {\tiny{$ \ell_{\tiny{1}} $}};
	\end{tikzpicture}.
\end{equs}
The recursive definition of $ \mathfrak{a} $ is enough for understanding the cancellations . We still want to provide another natural definition that involves a Butcher-Connes-Kreimer type coproduct $ \Delta_{\text{\tiny{NLS}}} : \CF \rightarrow \CF \otimes \CF $, a different version of the one introduced in \cite{BS},  defined recursively by
\begin{equs} \label{BCK_new}
	\begin{aligned}
		\Delta_{\text{\tiny{NLS}}} \CI_{(\mathfrak{t}_1,p)}( \lambda_{k}  ) & = \left( \one \, \otimes \CI_{(\mathfrak{t}_1,p)}( \lambda_{k}  )   \right), \\ 
		\Delta_{\text{\tiny{NLS}}} \CI_{(\mathfrak{t}_2,p)}( \lambda_{k}  T ) & = \left( \id \, \otimes \CI_{(\mathfrak{t}_2,p)}( \lambda_{k}  \cdot ) \right) \Delta_{\text{\tiny{NLS}}} T \\ & + (-1)^p i  \   \hat{\CI}_{(\mathfrak{t}_1,\bar{p})}(\lambda_{k} )T   \otimes \hat{\CI}_{(\mathfrak{t}_1,p)}(\lambda_k).
	\end{aligned}
\end{equs}
and it is extended multiplicatively in the following sense:
\begin{equs}
	\Delta_{\text{\tiny{NLS}}} T_i &= \sum_{(T_i)} T_i^{(1)} \otimes T_i^{(2)}, \ \forall i \in \lbrace 1,2 \rbrace,\\ 
		\Delta_{\text{\tiny{NLS}}} (T_1 \cdot T_2) &= \sum_{(T_1),(T_2)} (T_1^{(1)} \cdot_F  T_2^{(1)}) \otimes (T_1^{(2)}
		\cdot T_2^{(2)}).
\end{equs}
We define the arborification map $ \mathfrak{a} $ on a decorated tree $T$ by
\begin{equs} \label{arbo_new_NLS}		\mathfrak{a}(  T )
	= \mathcal{M}_{\tiny{\text{c}}}\left( \mathfrak{a}  \otimes P_{A}   \right) \Delta_{\text{\tiny{NLS}}}  T,
\end{equs}
where $ P_A $ is the projection on the letters of $A$ and $ \mathcal{M}_c $ is given as
\begin{equs}
	\mathcal{M}_c(u \otimes v) = uv.
\end{equs}
Here, $ uv $ denotes the concatenation of the words $u$ and $v$.
The arborification $\mathfrak{a}$ is extended to a forest as a morphism sending the forest product $ \cdot_F $ to the shuffle product $ \shuffle $:
\begin{equs}
	\mathfrak{a}(F_1 \cdot_F F_2) = \mathfrak{a}(F_1) \shuffle \mathfrak{a} (F_2).
\end{equs}
The definition of the arborification in \eqref{arbo_new_NLS} via a coproduct has been introduced the first time in a different context in \cite{BCEF18}.
A similar arborification has been used in \cite{Br24} for describing Poincaré-Dulac Normal forms. We define an evaluation map on words of this alphabet given by
\begin{equs} \label{word_NLS}
	(\Pi^A w_{\mathfrak{p}})(t)  & = \int_{0 < t_1 <.. <t_n} \mu^{2n-2} \prod_{\lbrace x,y \rbrace \in \mathfrak{p}_1} 	\mathbb{E}( \hat{\xi}_{\mff(x_-)}^{\mathfrak{c}(x)}(t_{x_+}) \hat{\xi}_{\mff(y_-)}^{\mathfrak{c}(y)}(t_{y_+}) ) \\ & \prod_{z \in E_w^{\mathfrak{p}}} \xi_{\mff(z_-)}^{\mathfrak{c}(z)}(t_{z_+})  \prod_{\lbrace x,y \rbrace \in \mathfrak{p}_2} 	\mathbb{E}( \xi_{\mff(x_-)}^{\mathfrak{c}(x)}(t_{x_+}) \xi_{\mff(y_-)}^{\mathfrak{c}(y)}(t_{y_+}) )  \prod_{i=1}^{n-1} dt_i
\end{equs}
where we have denoted the pairings $ \mathfrak{p} $ in the subscript of the word $ w_{\mathfrak{p}} $ which has $n$ letters.
A time variable is associated to each letter denoted by $t_i$ with $t_n=t$. Here, $E_w^{\mathfrak{p}}$ denotes the edges in the letters of $w$ which are not paired.
 As an example, one has 
\begin{equs}
	(\Pi^A \mathfrak{a}(\begin{tikzpicture}[scale=0.2,baseline=-5]
		\coordinate (root) at (0,-1);
		\coordinate (right) at (2,2);
		\coordinate (center) at (0,2);
		\coordinate (left) at (-2,2);
		\coordinate (leftr) at (0,5);
		\coordinate (leftc) at (-2,5);
		\coordinate (leftl) at (-4,5);
		\draw[kernels2] (right) -- (root);
		\draw[symbols,tinydots] (left) -- (root);
		\draw[kernels2] (center) -- (root);
		\draw[kernels2,tinydots] (leftc) -- (left);
		\draw[kernels2,tinydots] (leftr) -- (left);
		\draw[kernels2] (leftl) -- (left);
		\node[not] (rootnode) at (root) {};
		\node[not] (rootnode) at (left) {};
		\node[var] (rootnode) at (right) {\tiny{$ k_{\tiny{2}} $}};
		\node[var] (rootnode) at (center) {\tiny{$ k_{\tiny{1}} $}};
		\node[var] (rootnode) at (leftr) {\tiny{$ k_{\tiny{1}} $}};
		\node[var] (rootnode) at (leftl) {\tiny{$ k_{\tiny{4}} $}};
		\node[var] (trinode) at (leftc) {\tiny{$ k_5 $}};
	\end{tikzpicture}))(t) 
 &  = -i  (\Pi^A \begin{tikzpicture}[scale=0.2,baseline=-5]
 	\coordinate (root) at (0,-1);
 	\coordinate (rightc) at (1,2);
 	\coordinate (right) at (3,2);
 	\coordinate (leftc) at (-1,2);
 	\coordinate (left) at (-3,2);
 	\draw[kernels2,tinydots] (right) -- (root);
 	\draw[kernels2] (left) -- (root);
 	\draw[kernels2,tinydots] (rightc) -- (root);
 	\draw[kernels2] (leftc) -- (root);
 	\node[not] (rootnode) at (root) {};
 	\node[var2] (rootnode) at (left) {\tiny{$ \ell_{\tiny{1}} $}};
 	\node[var] (rootnode) at (leftc) {\tiny{$ k_{\tiny{4}} $}};
 	\node[var] (trinode) at (rightc) {\tiny{$ k_{\tiny{5}} $}};
 	\node[var] (trinode) at (right) {\tiny{$ k_{\tiny{1}} $}};
 \end{tikzpicture} \, \, \,	\begin{tikzpicture}[scale=0.2,baseline=-5]
 	\coordinate (root) at (0,-1);
 	\coordinate (right) at (2,2);
 	\coordinate (center) at (0,2);
 	\coordinate (left) at (-2,2);
 	\draw[kernels2] (right) -- (root);
 	\draw[kernels2,tinydots] (left) -- (root);
 	\draw[kernels2] (center) -- (root);
 	\node[not] (rootnode) at (root) {};
 	\node[var] (rootnode) at (right) {\tiny{$ k_{\tiny{2}} $}};
 	\node[var] (rootnode) at (center) {\tiny{$ k_{\tiny{1}} $}};
 	\node[var2] (rootnode) at (left) {\tiny{$ \ell_{\tiny{1}} $}};
 \end{tikzpicture})(t)
\\ & =
 -i \mu^2 \int_{0}^t  \xi_{k_2}(t) \xi_{k_4}(s)  \overline{\xi_{k_5}(s)}\mathbb{E}(\overline{\hat{\xi}_{\ell_1}}(t)\hat{\xi}_{\ell_1}(s) ) \mathbb{E}(\xi_{k_1}(t)\overline{\xi_{k_1}}(s) ) ds
\end{equs}
The next theorem connects the two formalisms via decorated trees and words. 
 It allows us to switch from  decorated trees to words where one can perform the algebraic manipulations required  to compute the cancellations.
\begin{theorem} \label{main_theorem_NLS}
	One has for every decorated tree $T_{\mathfrak{e},\mathfrak{p}}^{\mathfrak{f}}$
	\begin{equs}
		(\Pi T_{\mathfrak{e},\mathfrak{p}}^{\mathfrak{f}})(t) = (\Pi^A \mathfrak{a}(T_{\mathfrak{e},\mathfrak{p}}^{\mathfrak{f}}) )(t).
	\end{equs}
	\end{theorem}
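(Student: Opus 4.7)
The plan is to proceed by induction on the total number $m$ of $\mathfrak{t}_2$-edges of $T$, which equals the number of time integrations in $(\Pi T)(t)$. The recursion \eqref{arborification_NLS} is tailored to this induction: it strips exactly the integration edges attached to the root and replaces each by two terminal $\hat{\CI}_{(\mathfrak{t}_1,\cdot)}$-decorated leaves, which is the combinatorial counterpart of the splitting provided by Proposition~\ref{covariance_2}. For the base case $m=0$, the tree is a single letter $a\in A$ and both sides of the identity reduce immediately to the same product of $\xi$-factors and Wick expectations $\mathbb{E}(\xi\xi)$ over $\mathfrak{p}_2$, with $\mathfrak{p}_1=\emptyset$.

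For the inductive step, write $T=\prod_j \CI_{(\mathfrak{t}_1,p_j)}(\lambda_{\ell_j})\prod_r \CI_{(\mathfrak{t}_2,q_r)}(\lambda_{k_r}\tau_r)$ with $m\geq 1$. Since $\Pi$ is a character, $(\Pi T)(t)$ factorises over the children of the root. Applying Proposition~\ref{covariance_2} to each outer integration kernel,
\begin{equs}
e^{(-1)^{q_r} i(s-t)k_r^2}=\mathbb{E}\bigl(\hat{\xi}_{k_r}^{q_r}(t)\,\hat{\xi}_{k_r}^{\bar q_r}(s)\bigr),
\end{equs}
creates a new pair in $\mathfrak{p}_1$ whose parent-side factor $\hat{\xi}_{k_r}^{q_r}(t)$ attaches to the rightmost root letter $\prod_j\CI_{(\mathfrak{t}_1,p_j)}(\lambda_{\ell_j})\prod_r\hat{\CI}_{(\mathfrak{t}_1,q_r)}(\lambda_{k_r})$ of \eqref{arborification_NLS}, while the child-side factor $\hat{\xi}_{k_r}^{\bar q_r}(s)$ feeds into $\tau_r$, so that the inductive hypothesis identifies the resulting inner integrand with $(\Pi^A\mathfrak{a}(\hat{\CI}_{(\mathfrak{t}_1,\bar q_r)}(\lambda_{k_r})\tau_r))(s)$. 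The $m$ resulting independent iterated integrals $\int_0^t(\cdots)\,ds_r$ are then recombined via the classical shuffle identity for iterated integrals---of which \eqref{shuffle_identity} is the prototype, obtained by decomposing the product simplex $\prod_r\{0<s_r<t\}$ as the disjoint union over its $m!$ orderings---which expresses the product as $\Pi^A$ evaluated on $\mathfrak{a}(\hat{\CI}_{(\mathfrak{t}_1,\bar q_1)}(\lambda_{k_1})\tau_1)\shuffle\cdots\shuffle\mathfrak{a}(\hat{\CI}_{(\mathfrak{t}_1,\bar q_m)}(\lambda_{k_m})\tau_m)$ concatenated with the root letter, matching \eqref{arborification_NLS} exactly including the prefactor $\prod_r(-1)^{q_r}i$.

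The main obstacle is bookkeeping rather than conceptual: one must carefully align the conjugation exponents $(q_r,\bar q_r)$ so that the Wick expectations of \eqref{general_pi_1} and \eqref{word_NLS} agree, verify that the original pairs in $\mathfrak{p}_2$ remain correctly attached through the arborification while the new pairs accumulate in $\mathfrak{p}_1$, and check that the $\mu^{2n-2}$ prefactor in \eqref{word_NLS} (with $n$ the number of letters) aggregates correctly from the $m=n-1$ integration edges. A cleaner alternative would be to work directly with the coproduct formulation \eqref{arbo_new}: the forest-product compatibility follows from the morphism property of $\mathfrak{a}$, and the only non-trivial identity left to check is on a single graft $\CI_{(\mathfrak{t}_2,p)}(\lambda_k T')$, which is precisely Proposition~\ref{covariance_2}.
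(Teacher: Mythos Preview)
Your inductive approach uses the same key identity (Proposition~\ref{covariance_2}) and the same shuffle/simplex decomposition as the paper, but there is a genuine gap in the factorisation step. You invoke ``since $\Pi$ is a character, $(\Pi T)(t)$ factorises over the children of the root'', but the character property in \eqref{Pi_def} is only stated for the recursive $\Pi$ on trees \emph{without} pairing. Once a pairing $\mathfrak p$ is present, $\Pi$ is given by the non-recursive formula \eqref{general_pi_1}, and a pair $\{x,y\}\in\mathfrak p$ with $x$ a leaf of one subtree $\tau_r$ and $y$ a leaf of another subtree $\tau_{r'}$ (or of the root letter) contributes a factor $\mathbb E(\xi_{\mff(x_-)}^{\mathfrak c(x)}(t_{x_+})\,\xi_{\mff(y_-)}^{\mathfrak c(y)}(t_{y_+}))$ depending jointly on time variables living in different branches. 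The integrand therefore does \emph{not} split as a product $\prod_r(\Pi\,\hat{\CI}_{(\mathfrak t_1,\bar q_r)}(\lambda_{k_r})\tau_r)(s_r)$, the ``$m$ resulting independent iterated integrals'' are not independent, and the inductive hypothesis cannot be applied to each subtree separately. The very first examples $T_5,T_6$ in the paper already exhibit such cross-branch pairs.

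The paper sidesteps this by arguing globally rather than inductively: it applies Proposition~\ref{covariance_2} to \emph{all} $\mathfrak t_2$-edges simultaneously in \eqref{general_pi_1}, and then decomposes the full integration domain $\prod_{e\in E_T^2}\{0<t_{e_-}<t_{e_+}\}$ as the disjoint union of simplices indexed by the linear extensions $\sigma\in\mathfrak S(\bar N_T)$ of the tree partial order, each simplex being matched with one word in $\mathfrak a(T_{\mathfrak e,\mathfrak p}^{\mff})$. No factorisation over subtrees is ever needed, so cross-branch pairings cause no difficulty. Your induction can be repaired---for instance by first proving the identity for $\mathfrak p=\emptyset$, where $\Pi$ really is a character and your argument goes through, and then observing that both sides are the same multilinear expression in the random $\xi$'s so that applying the same Wick contractions over $\mathfrak p$ yields the paired statement---but as written the factorisation step fails.
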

\begin{proof}
	We recall \eqref{general_pi_1}
	\begin{equs} 
		\begin{aligned}
			(\Pi T_{\mathfrak{e},\mathfrak{p}}^{\mff})(t)  & =   \int  \prod_{e \in E_T^2} (-1)^{\mathfrak{c}(e)} i \mu^2 \one_{\lbrace  0<t_{e_-} < t_{e_+} \rbrace} \left( \exp((-1)^{\mathfrak{c}(e)}i(t_{e_-}-t_{e_+}) \mff(e_-)^2) \right)  \\ & \prod_{z \in E_T^{\mathfrak{p}}} \xi_{\mff(z_-)}^{\mathfrak{c}(z)}(t_{z_+})  \prod_{\lbrace x,y \rbrace \in \mathfrak{p}} 	\mathbb{E}( \xi_{\mff(x_-)}^{\mathfrak{c}(x)}(t_{x_+}) \xi_{\mff(y_-)}^{\mathfrak{c}(y)}(t_{y_+}) ) \prod_{e \in E_T^2} dt_{e_-}.
		\end{aligned}
	\end{equs}
By applying Proposition \ref{covariance_2}, one gets
\begin{equs}
		\exp((-1)^{\mathfrak{c}(e)}i(t_{e_-}-t_{e_+}) \mff(e_-)^2) =  \mathbb{E}\left( \hat{\xi}_{\mff(e_-)}^{\mathfrak{c}(e)}(t_{e_+}) \overline{\hat{\xi}_{\mff(e_-)}^{\mathfrak{c}(e)}(t_{e_-})} \right).
\end{equs}
By substituting this relation into the identity for $ (\Pi T_{\mfe,\mathfrak{p}}^{\mff})(t)  $, one gets
\begin{equs}
	(\Pi T_{\mfe,\mathfrak{p}}^{\mff})(t)  & =   \int  \prod_{e \in E_T^2} (-1)^{\mathfrak{c}(e)} i \mu^2 \one_{\lbrace  0<t_{e_-} < t_{e_+} \rbrace} \mathbb{E}\left( \hat{\xi}_{\mff(e_-)}^{\mathfrak{c}(e)}(t_{e_+}) \overline{\hat{\xi}_{\mff(e_-)}^{\mathfrak{c}(e)}(t_{e_-})} \right)  \\ & \prod_{z \in E_T^{\mathfrak{p}}} \xi_{\mff(z_-)}^{\mathfrak{c}(z)}(t_{z_+})  \prod_{\lbrace x,y \rbrace \in \mathfrak{p}} 	\mathbb{E}( \xi_{\mff(x_-)}^{\mathfrak{c}(x)}(t_{x_+}) \xi_{\mff(y_-)}^{\mathfrak{c}(y)}(t_{y_+}) ) \prod_{e \in E_T^2} dt_{e_-}.
\end{equs}
We fix an order on the nodes $ \bar{N}_T $. Let $ n = \mathrm{Card}(\bar{N}_T ) $, then we define $  \mathfrak{S}(\bar{N}_T ) $ as the permutation on these nodes that respects the partial order given by the tree structure of $T$.
Then, one has 
\begin{equs}
	\ &	(\Pi T_{\mfe,\mathfrak{p}}^{\mff})(t)   = \sum_{\sigma \in \mathfrak{S}(\bar{N}_{T})}  \int_{t_{\sigma(u_1)} < ... < t_{\sigma(u_n)}} \prod_{\lbrace x,y \rbrace \in \mathfrak{p}_1} (-1)^{\mathfrak{c}(x)} i \mu^2 \mathbb{E}\left( \hat{\xi}_{\mff(x_-)}^{\mathfrak{c}(x)}(t_{x_+}) \overline{\hat{\xi}_{\mff(y_-)}^{\mathfrak{c}(y)}(t_{y_-})} \right)  \\ & \prod_{z \in E_T^{\mathfrak{p}_2}} \xi_{\mff(z_-)}^{\mathfrak{c}(z)}(t_{z_+})  \prod_{\lbrace x,y \rbrace \in \mathfrak{p}_2} 	\mathbb{E}( \xi_{\mff(x_-)}^{\mathfrak{c}(x)}(t_{x_+}) \xi_{\mff(y_-)}^{\mathfrak{c}(y)}(t_{y_+}) ) \prod_{i=1}^n dt_{\sigma(u_i)} 
\end{equs}
where $ \mathfrak{p}_1 $ corresponds to the new pairing coming from the edges in $E_T^2$. These edges are split into two edges. Then, one has $ \mathfrak{p}_2 = \mathfrak{p} $.
In the previous expression, one can recognise all the analytical expressions of the letters that appear in $ \mathfrak{a}(T_{\mfe,\mathfrak{p}}^{\mff}) $. The order of these letters is encoded in the permutation $\sigma$. Indeed, all the letters of the words appearing in  $ \mathfrak{a}(T_{\mfe,\mathfrak{p}}^{\mff}) $ corresponds to nodes in $ \bar{N}_T $. If we consider a word $ w $ in the decomposition  $ \mathfrak{a}(T_{\mfe,\mathfrak{p}}^{\mff}) $ all the other words can be recovered by permuting the letters of $w$ with an element $ \sigma \in \mathfrak{S}(\bar{N}_{T}) $.
\end{proof}

Making the identification $ w_{k_1} \approx w_{\ell_1} $ boils down to interchange the color of two pairs of leaves. This can be encoded via a linear map on words that we denote by $ \psi_{k_1,\ell_1} $. Below, we show one example of computation

 We now show how this formalism can lead to a pictorial proof for the cancellations in \cite{DH2301}. However, we first need to introduce the concept of ``harmless terms". Informally speaking, we say that a linear combination of words is \emph{harmless} if it can be bounded analytically without the need of any spurious cancellation, and we denote by $\mathscr H$ the set of harmless terms. More precisely, in this setting, we define a linear combination of words to be harmless if it can be bounded via \cite[Lemma 7.1]{DH2301}. Since the content of this lemma is quite technical, we now give an example of a harmless term under this definition, which corresponds to changing the colour of two pairs of leaves. More precisely, we define a linear map on words that we denote by $ \psi_{k_1,\ell_1} $, and swaps the colour of the terms decorated by $k_1$ and $l_1$ respectively. Below, we show one example of this map
\begin{equs} \label{switching_nodes}
	\psi_{k_1,\ell_1}( % [inline block 0: 44 envs, 30757 chars -> data_tex | \begin{tikzpicture}[scale=0.2,baseline=-5] 		\coordinate (root) at (0,-1);...]

\end{align*}
where the condition 
$$ -k_1 + k_2+k_3 + r_2 - r_1 = -h_1+h_2+h_3  $$
can be rewritten as 
$$ \ell_2 = \ell_4, \quad \ell_3 = \ell_6. $$
The last branches with a green node in the computation above belong in general to a bigger letter. We now make the choice 
$$h_1 = k_1, \quad  h_2 = k_2, \quad  h_3 = \ell_1 = k_3 + r_2 - r_1.$$ This way, we obtain that 
$$ a_1 = b_2, \quad a_2 = b_1, \quad a_3 = b_3. $$
In view of $\eqref{arborification_NLS}$ and $\eqref{decomposition_concrete}$, if $T_1$, $T_2$ are the two trees respectively, we have that there exist $u,v$ in the word algebra $\mathbb C[W_{[A]}]$ such that 
\begin{align*}
	\mathfrak{a}(T_1) &= i \big(a_1a_2 \shuffle a_3 \shuffle u\big)v   \\
	\mathfrak{a}(T_2) &= -i \big(b_1 \shuffle b_2b_3 \shuffle u\big)v= -i \big(a_2 \shuffle a_1a_3 \shuffle u\big)v.
\end{align*}
Therefore, from the definition of the shuffle product \eqref{shuffle_def}, we have
\begin{equs} \label{small_word}
	\mathfrak{a}(T_1) +  \mathfrak{a}(T_2) = i \big(a_3 a_1a_2) \shuffle u\big) v -  i\big(a_2a_1a_3 \shuffle u\big)v.
\end{equs}
In particular, we observe that the terms with the letters $a_1,a_2,a_3$ in this order cancel out. It turns out that from an analytical point of view, these are all the ``problematic" terms, in the sense that  
$$ \Pi^A(\mathfrak{a}(T_1) +  \mathfrak{a}(T_2)) $$
can be estimated directly using \cite[Lemma 7.1]{DH2301}, without exploiting any further cancellation.

The third cancellation can be observed similarly. It corresponds to the two terms 
$$  \begin{tikzpicture}[scale=0.2,baseline=-5]
	\coordinate (root) at (0,-3);
	\coordinate (c) at (0,-1);
	\coordinate (cl) at (-2,2);
	\coordinate (cc) at (0,2);
	\coordinate (cr) at (2,2);
	\coordinate (cll) at (-2,5);
	\coordinate (clc) at (0,5);
	\coordinate (clr) at (2,5);
	\coordinate (clcl) at (-2,8);
	\coordinate (clcc) at (0,8);
	\coordinate (clcr) at (2,8);
	\draw[symbols] (root) -- (c);
	\draw[kernels2,tinydots] (c) -- (cl);
	\draw[symbols,tinydots] (cc) -- (clc);
	\draw[symbols] (c) -- (cc);
	\draw[kernels2] (c) -- (cr);
	\draw[kernels2] (cc) -- (cll);
	\draw[kernels2] (cc) -- (clr);
	\draw[kernels2] (clc) -- (clcl);
	\draw[kernels2,tinydots] (clc) -- (clcr);
	\draw[kernels2,tinydots] (clc) -- (clcc);
	\node[not] (rootnode) at (c) {};
	\node[not] (rootnode) at (cc) {};
	\node[not] (rootnode) at (clc) {};
	\node[var] (rootnode) at (cl) {\tiny$k_{\tiny{2}}$};
	\node[var] (rootnode) at (cr) {\tiny$k_{\tiny{1}}$};
	\node[var] (rootnode) at (clr) {\tiny$k_{\tiny{2}}$};
	\node[var] (rootnode) at (clcr) {\tiny$k_{\tiny{1}}$};
	\node[var] (rootnode) at (cll) {\tiny$r_{\tiny{3}}$};
	\node[var] (rootnode) at (clcl) {\tiny$r_{\tiny{1}}$};
	\node[var] (rootnode) at (clcc) {\tiny$r_{\tiny{2}}$};
\end{tikzpicture}
\text{ and } \, \,
\begin{tikzpicture}[scale=0.2,baseline=-5]
	\coordinate (root) at (0,-3);
	\coordinate (rt) at (0,-1);
	\coordinate (l) at (-4,2);
	\coordinate (c) at (0,2);
	\coordinate (r) at (7,2);
	\coordinate (ll) at (-2,5);
	\coordinate (lc) at (0,5);
	\coordinate (lr) at (2,5);
	\coordinate (rl) at (5, 5);
	\coordinate (rc) at (7,5);
	\coordinate (rr) at (9,5);
	\draw[symbols] (root) -- (rt);
	\draw[kernels2,tinydots] (rt) -- (l);
	\draw[symbols] (rt) -- (r); 
	\draw[symbols] (rt) -- (c);
	\draw[kernels2,tinydots] (c) -- (ll);
	\draw[kernels2] (c) -- (lc);
	\draw[kernels2] (c) -- (lr);
	\draw[kernels2,tinydots] (r) -- (rl);
	\draw[kernels2] (r) -- (rc);
	\draw[kernels2] (r) -- (rr);
	\node[not] (rootnode) at (c) {};
	\node[not] (rootnode) at (r) {};
		\node[not] (rootnode) at (rt) {};
	\node[var] (rootnode) at (l) {\tiny$h_{\tiny{2}}$};
	\node[var] (rootnode) at (lr) {\tiny$h_{\tiny{2}}$};
	\node[var] (rootnode) at (lc) {\tiny$r_3$};
	\node[var] (rootnode) at (rl) {\tiny${r}_{\tiny{2}}$};
	\node[var] (rootnode) at (ll) {\tiny$k_1$};
	\node[var] (rootnode) at (rc) {\tiny$r_{\tiny{1}}$};
	\node[var] (rootnode) at (rr) {\tiny$k_{\tiny{1}}$};
\end{tikzpicture},
$$
under the condition 
$ r_1 + r_3 + k_2 - r_2 - k_1 =  r_3 + h_2 - k_1,$
i.e.
$$ h_2 = k_2 + r_1 - r_2. $$
We have that 
\begin{align*}
	\mathfrak{a}(
	\begin{tikzpicture}[scale=0.2,baseline=-5]
	\coordinate (root) at (0,-3);
	\coordinate (c) at (0,-1);
	\coordinate (cl) at (-2,2);
	\coordinate (cc) at (0,2);
	\coordinate (cr) at (2,2);
	\coordinate (cll) at (-2,5);
	\coordinate (clc) at (0,5);
	\coordinate (clr) at (2,5);
	\coordinate (clcl) at (-2,8);
	\coordinate (clcc) at (0,8);
	\coordinate (clcr) at (2,8);
	\draw[symbols] (root) -- (c);
	\draw[kernels2,tinydots] (c) -- (cl);
	\draw[symbols,tinydots] (cc) -- (clc);
	\draw[symbols] (c) -- (cc);
	\draw[kernels2] (c) -- (cr);
	\draw[kernels2] (cc) -- (cll);
	\draw[kernels2] (cc) -- (clr);
	\draw[kernels2] (clc) -- (clcl);
	\draw[kernels2,tinydots] (clc) -- (clcr);
	\draw[kernels2,tinydots] (clc) -- (clcc);
	\node[not] (rootnode) at (c) {};
	\node[not] (rootnode) at (cc) {};
	\node[not] (rootnode) at (clc) {};
	\node[var] (rootnode) at (cl) {\tiny$k_{\tiny{2}}$};
	\node[var] (rootnode) at (cr) {\tiny$k_{\tiny{1}}$};
	\node[var] (rootnode) at (clr) {\tiny$k_{\tiny{2}}$};
	\node[var] (rootnode) at (clcr) {\tiny$k_{\tiny{1}}$};
	\node[var] (rootnode) at (cll) {\tiny$r_{\tiny{3}}$};
	\node[var] (rootnode) at (clcl) {\tiny$r_{\tiny{1}}$};
	\node[var] (rootnode) at (clcc) {\tiny$r_{\tiny{2}}$};
	\end{tikzpicture}
	)
	&= 
i	\nlsconjletter{\ell_{\tiny{1}}}{r_{\tiny{1}}}{r_{\tiny{2}}}{k_{\tiny{1}}} \,\,\,
		\begin{tikzpicture}[scale=0.2,baseline=-5]
		\coordinate (root) at (0,-1);
		\coordinate (rightc) at (1,2);
		\coordinate (right) at (3,2);
		\coordinate (leftc) at (-1,2);
		\coordinate (left) at (-3,2);
		\draw[kernels2] (right) -- (root);
		\draw[kernels2] (rightc) -- (root);
		\draw[kernels2,tinydots] (left) -- (root);
		\draw[kernels2,tinydots] (leftc) -- (root);
		\node[not] (rootnode) at (root) {};
		\node[var] (rootnode) at (rightc) {\tiny{$ {r}_{\tiny{3}} $}};
		\node[var2] (rootnode) at (leftc) {\tiny{$ \ell_{\tiny{2}} $}};
		\node[var2] (rootnode) at (left) {\tiny{$ {\ell}_{\tiny{1}} $}};
		\node[var] (rootnode) at (right) {\tiny{$ k_{\tiny{2}} $}};
	\end{tikzpicture}
	 \,\,\,
		\begin{tikzpicture}[scale=0.2,baseline=-5]
		\coordinate (root) at (0,-1);
		\coordinate (rightc) at (1,2);
		\coordinate (right) at (3,2);
		\coordinate (leftc) at (-1,2);
		\coordinate (left) at (-3,2);
		\draw[kernels2] (right) -- (root);
		\draw[kernels2] (rightc) -- (root);
		\draw[kernels2,tinydots] (left) -- (root);
		\draw[kernels2,tinydots] (leftc) -- (root);
		\node[not] (rootnode) at (root) {};
		\node[var] (rootnode) at (rightc) {\tiny{$ k_{\tiny{1}} $}};
		\node[var] (rootnode) at (leftc) {\tiny{$ k_{\tiny{2}} $}};
		\node[var2] (rootnode) at (left) {\tiny{$ {\ell}_{\tiny{3}} $}};
		\node[var2] (rootnode) at (right) {\tiny{$ \ell_{\tiny{2}} $}};
	\end{tikzpicture} \,\,\, 
	\begin{tikzpicture}[scale=0.2,baseline=-5]
		\coordinate (root) at (0,-1);
		\coordinate (center) at (0,2);
		\draw[kernels2] (root) -- (center);
		\node[var2] (rootnote) at (center) {\tiny{$\ell_{\tiny{3}}$}};
	\end{tikzpicture} &&= i a_1 a_2 a_3 \begin{tikzpicture}[scale=0.2,baseline=-5]
		\coordinate (root) at (0,-1);
		\coordinate (center) at (0,2);
		\draw[kernels2] (root) -- (center);
		\node[var2] (rootnote) at (center) {\tiny{$\ell_{\tiny{3}}$}};
	\end{tikzpicture},
	\\
	\mathfrak{a}(
	\begin{tikzpicture}[scale=0.2,baseline=-5]
			\coordinate (root) at (0,-3);
		\coordinate (rt) at (0,-1);
		\coordinate (l) at (-4,2);
		\coordinate (c) at (0,2);
		\coordinate (r) at (7,2);
		\coordinate (ll) at (-2,5);
		\coordinate (lc) at (0,5);
		\coordinate (lr) at (2,5);
		\coordinate (rl) at (5, 5);
		\coordinate (rc) at (7,5);
		\coordinate (rr) at (9,5);
		\draw[symbols] (root) -- (rt);
		\draw[kernels2,tinydots] (rt) -- (l);
		\draw[symbols] (rt) -- (r); 
		\draw[symbols] (rt) -- (c);
		\draw[kernels2,tinydots] (c) -- (ll);
		\draw[kernels2] (c) -- (lc);
		\draw[kernels2] (c) -- (lr);
		\draw[kernels2,tinydots] (r) -- (rl);
		\draw[kernels2] (r) -- (rc);
		\draw[kernels2] (r) -- (rr);
		\node[not] (rootnode) at (c) {};
		\node[not] (rootnode) at (r) {};
		\node[not] (rootnode) at (rt) {};
		\node[var] (rootnode) at (l) {\tiny$h_{\tiny{2}}$};
		\node[var] (rootnode) at (lr) {\tiny$h_{\tiny{2}}$};
		\node[var] (rootnode) at (lc) {\tiny$r_3$};
		\node[var] (rootnode) at (rl) {\tiny${r}_{\tiny{2}}$};
		\node[var] (rootnode) at (ll) {\tiny$k_1$};
		\node[var] (rootnode) at (rc) {\tiny$r_{\tiny{1}}$};
		\node[var] (rootnode) at (rr) {\tiny$k_{\tiny{1}}$};
	\end{tikzpicture}
	)
	&= -i
	(\nlsletter{\ell_{\tiny{5}}}{k_{\tiny 1}}{r_{\tiny{3}}}{h_{\tiny2}} \shuffle \nlsletter{\ell_{\tiny{6}}}{r_{\tiny{2}}}{r_{\tiny{1}}}{k_{\tiny{1}}}) 
\, \, \, \begin{tikzpicture}[scale=0.2,baseline=-5]
	\coordinate (root) at (0,-1);
	\coordinate (rightc) at (1,2);
	\coordinate (right) at (3,2);
	\coordinate (leftc) at (-1,2);
	\coordinate (left) at (-3,2);
	\draw[kernels2] (right) -- (root);
	\draw[kernels2] (rightc) -- (root);
	\draw[kernels2,tinydots] (left) -- (root);
	\draw[kernels2,tinydots] (leftc) -- (root);
	\node[not] (rootnode) at (root) {};
	\node[var2] (rootnode) at (rightc) {\tiny{$ \ell_{\tiny{5}} $}};
	\node[var] (rootnode) at (leftc) {\tiny{$ h_{\tiny{2}} $}};
	\node[var2] (rootnode) at (left) {\tiny{$ {\ell}_{\tiny{3}} $}};
	\node[var2] (rootnode) at (right) {\tiny{$ \ell_{\tiny{6}} $}};
\end{tikzpicture}  \,\,\, 
	\begin{tikzpicture}[scale=0.2,baseline=-5]
		\coordinate (root) at (0,-1);
		\coordinate (center) at (0,2);
		\draw[kernels2] (root) -- (center);
		\node[var2] (rootnote) at (center) {\tiny{$\ell_{\tiny{3}}$}};
	\end{tikzpicture} &&= -i (b_1 \shuffle b_2) b_3  \begin{tikzpicture}[scale=0.2,baseline=-5]
		\coordinate (root) at (0,-1);
		\coordinate (center) at (0,2);
		\draw[kernels2] (root) -- (center);
		\node[var2] (rootnote) at (center) {\tiny{$\ell_{\tiny{3}}$}};
	\end{tikzpicture}.
\end{align*}
 The condition $h_2 = k_2 + r_2 -r_1$ corresponds to 
$$ \ell_5 = \ell_2, \quad \ell_6 = \ell_1. $$
Therefore, if $\psi_{k_1,\ell_1}$ is the map that swaps the leaf decorations $k_1$ and $\ell_1$ as above, we obtain that 
\begin{equation*}
	\psi_{k_1,\ell_1}(b_1) = a_2, \quad \psi_{k_1,\ell_1}(b_2) = a_1, \quad \psi_{k_1,\ell_1}(b_3) = a_3.
\end{equation*}
In particular, we have that 
$$  \mathfrak{a}(
\begin{tikzpicture}[scale=0.2,baseline=-5]
\coordinate (root) at (0,-3);
\coordinate (c) at (0,-1);
\coordinate (cl) at (-2,2);
\coordinate (cc) at (0,2);
\coordinate (cr) at (2,2);
\coordinate (cll) at (-2,5);
\coordinate (clc) at (0,5);
\coordinate (clr) at (2,5);
\coordinate (clcl) at (-2,8);
\coordinate (clcc) at (0,8);
\coordinate (clcr) at (2,8);
\draw[symbols] (root) -- (c);
\draw[kernels2,tinydots] (c) -- (cl);
\draw[symbols,tinydots] (cc) -- (clc);
\draw[symbols] (c) -- (cc);
\draw[kernels2] (c) -- (cr);
\draw[kernels2] (cc) -- (cll);
\draw[kernels2] (cc) -- (clr);
\draw[kernels2] (clc) -- (clcl);
\draw[kernels2,tinydots] (clc) -- (clcr);
\draw[kernels2,tinydots] (clc) -- (clcc);
\node[not] (rootnode) at (c) {};
\node[not] (rootnode) at (cc) {};
\node[not] (rootnode) at (clc) {};
\node[var] (rootnode) at (cl) {\tiny$k_{\tiny{2}}$};
\node[var] (rootnode) at (cr) {\tiny$k_{\tiny{1}}$};
\node[var] (rootnode) at (clr) {\tiny$k_{\tiny{2}}$};
\node[var] (rootnode) at (clcr) {\tiny$k_{\tiny{1}}$};
\node[var] (rootnode) at (cll) {\tiny$r_{\tiny{3}}$};
\node[var] (rootnode) at (clcl) {\tiny$r_{\tiny{1}}$};
\node[var] (rootnode) at (clcc) {\tiny$r_{\tiny{2}}$};
\end{tikzpicture}
)
+ 
\psi_{k_1,\ell_1}\big( \mathfrak{a}(
\begin{tikzpicture}[scale=0.2,baseline=-5]
		\coordinate (root) at (0,-3);
	\coordinate (rt) at (0,-1);
	\coordinate (l) at (-4,2);
	\coordinate (c) at (0,2);
	\coordinate (r) at (7,2);
	\coordinate (ll) at (-2,5);
	\coordinate (lc) at (0,5);
	\coordinate (lr) at (2,5);
	\coordinate (rl) at (5, 5);
	\coordinate (rc) at (7,5);
	\coordinate (rr) at (9,5);
	\draw[symbols] (root) -- (rt);
	\draw[kernels2,tinydots] (rt) -- (l);
	\draw[symbols] (rt) -- (r); 
	\draw[symbols] (rt) -- (c);
	\draw[kernels2,tinydots] (c) -- (ll);
	\draw[kernels2] (c) -- (lc);
	\draw[kernels2] (c) -- (lr);
	\draw[kernels2,tinydots] (r) -- (rl);
	\draw[kernels2] (r) -- (rc);
	\draw[kernels2] (r) -- (rr);
	\node[not] (rootnode) at (c) {};
	\node[not] (rootnode) at (r) {};
	\node[not] (rootnode) at (rt) {};
	\node[var] (rootnode) at (l) {\tiny$h_{\tiny{2}}$};
	\node[var] (rootnode) at (lr) {\tiny$h_{\tiny{2}}$};
	\node[var] (rootnode) at (lc) {\tiny$r_3$};
	\node[var] (rootnode) at (rl) {\tiny${r}_{\tiny{2}}$};
	\node[var] (rootnode) at (ll) {\tiny$k_1$};
	\node[var] (rootnode) at (rc) {\tiny$r_{\tiny{1}}$};
	\node[var] (rootnode) at (rr) {\tiny$k_{\tiny{1}}$};
\end{tikzpicture}
)
\big) = - i a_2 a_1 a_3 \begin{tikzpicture}[scale=0.2,baseline=-5]
	\coordinate (root) at (0,-1);
	\coordinate (center) at (0,2);
	\draw[kernels2] (root) -- (center);
	\node[var2] (rootnote) at (center) {\tiny{$\ell_{\tiny{3}}$}};
\end{tikzpicture},
$$
which is again a word that can be estimated using \cite[Lemma 7.1 (a)]{DH2301}.

\section{Three-dimensional cubic wave equation}

\label{Sec::3}

In this section, we analyse the cancellations obtained in \cite{BDNY24}. We consider the following equation
\begin{equs}
(\partial_t^2 + 1 - \Delta) u =  - u^3, \quad (u,\langle \nabla  \rangle^{-1}u)|_{t=0}  = (\phi^{\cos},\phi^{\sin}),
\end{equs}
where $(t, x)  \in \mathbb{R} \times \mathbb{T}^3 $.
Let $ \varphi \in \lbrace \cos, \sin \rbrace $, we define $ (W_{n}^{\varphi})_{n \in \mathbb{Z}^3} $ as independent copies of the Gaussian process with mean zero and covariance given by
\begin{equs}
	\mathbb{E}( W _n^{\varphi}(s) W _m^{\varphi}(t)) = \delta_{m+n} s \wedge t.
\end{equs}
Then, $ d W_t^{\varphi} $ is a space-time white noise.
 We define the Gaussian process $ W^{\varphi}_s $ by
 \begin{equs}
 	W^{\varphi}_s(x) := \sum_{n \in \mathbb{Z}^3} W^{\varphi}_n(s) e^{inx} \end{equs}
We consider the stochastic heat equation with space-time white noise given by
\begin{equs} \label{parabolic_solution}
	(\partial_t + 1 - \Delta) u^{\varphi} = \sqrt{2} d W^{\varphi}_t
\end{equs}
Then, one has
\begin{equs}
	u^{\varphi}(t,x) & = \sqrt{2} \sum_{n \in \mathbb{Z}^3} u^{\varphi}_n(t)  e^{in  x},  \quad
u^{\varphi}_n(t)	 = \int^t_{- \infty} e^{-(t-s) \langle n \rangle^2}  d W^{\varphi}_n(s).
\end{equs}
From \cite[Lemma 6.5]{BDNY24} via a simple computation, one gets
\begin{equs}
	\mathbb{E}( u^{\varphi}_n(t) u^{\varphi'}_{n'}(t') ) = \delta_{\varphi = \varphi'} \delta_{n+n'}
	\frac{1}{\langle n \rangle^2 } e^{-|t-t'| \langle n \rangle^2}.
\end{equs}
Now, we consider the solution of the linear wave equation:
\begin{equs}
	(\partial_t^2 + 1 - \Delta) v =  0, \quad (v,\langle \nabla  \rangle^{-1}v)|_{t=0}  = (u^{\cos},u^{\sin}).
\end{equs}
The solution can be described explicitly by
\begin{equs}
	v(t,x)=  \cos(t \langle \nabla \rangle) u^{\cos}(t,x) + \sin(t \langle \nabla \rangle) u^{\sin}(t,x).
\end{equs}
Going into Fourier space, one has
\begin{equs}
	v_n(t)=  \cos(t \langle n \rangle) u^{\cos}_n(t) + \sin(t \langle n \rangle) u^{\sin}_n(t).
	\end{equs}
Then, we can compute the covariance of $ v_n $ and get as in \cite[Lemma 6.10]{BDNY24}
\begin{equs} \label{covariance_v}
	\mathbb{E}( v_n(t) v_{n'}(t') ) = 
	\delta_{n+n'} \frac{\cos((t-t') \langle n \rangle)}{\langle n  \rangle^2}.
\end{equs}
The previous covariance gives a nice identity
 described in the next proposition:
\begin{proposition} \label{covariance} One has
	\begin{equs} \label{identity_key}
		\frac{\sin((t-t') \langle n \rangle)}{\langle n  \rangle} = - \partial_t	\mathbb{E}( v_n(t) v_{-n'}(t') ) = \partial_{t'}	\mathbb{E}( v_n(t) v_{-n'}(t') ).
	\end{equs}
	\end{proposition}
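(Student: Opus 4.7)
The plan is to derive the identity as a direct differentiation of the covariance formula~\eqref{covariance_v} already established in the excerpt. First I would substitute $n'$ by $-n'$ in \eqref{covariance_v}, using that $\langle -n'\rangle = \langle n'\rangle$, which turns the Kronecker factor $\delta_{n+n'}$ into $\delta_{n-n'}$; thus the expectation $\mathbb{E}(v_n(t) v_{-n'}(t'))$ vanishes unless $n = n'$, in which case it equals $\langle n\rangle^{-2}\cos((t-t')\langle n\rangle)$.

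Next I would differentiate the expression in $t$ (respectively in $t'$). Applying the chain rule,
\begin{equs}
\partial_t \cos((t-t')\langle n\rangle) &= -\langle n\rangle \sin((t-t')\langle n\rangle), \\
\partial_{t'} \cos((t-t')\langle n\rangle) &= +\langle n\rangle \sin((t-t')\langle n\rangle).
\end{equs}
Dividing by $\langle n\rangle^2$ and inserting the sign gives
\begin{equs}
-\partial_t \mathbb{E}(v_n(t) v_{-n'}(t')) = \partial_{t'} \mathbb{E}(v_n(t) v_{-n'}(t')) = \frac{\sin((t-t')\langle n\rangle)}{\langle n\rangle},
\end{equs}
where, as in the statement, the Kronecker factor $\delta_{n-n'}$ is kept implicit (the identity is to be read on the support $n=n'$).

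There is essentially no obstacle here: the entire content is packed into \eqref{covariance_v}, which was itself quoted from \cite[Lemma~6.10]{BDNY24}. The only mild subtlety to flag is the change of sign $n' \mapsto -n'$ in the index of the second field, which swaps $\delta_{n+n'}$ for $\delta_{n-n'}$ and is what makes both partial derivatives coincide rather than differ by a sign. Everything else is a one-line application of $\frac{d}{du}\cos u = -\sin u$.
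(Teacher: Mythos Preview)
Your proposal is correct and matches the paper's own proof essentially line for line: both simply differentiate the covariance formula~\eqref{covariance_v} and read off the result. Your explicit handling of the substitution $n'\mapsto -n'$ and the resulting Kronecker factor $\delta_{n-n'}$ is a small clarification the paper leaves implicit, but the argument is otherwise identical.
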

\begin{proof}
	It is an immediate consequence of \eqref{covariance_v}. Indeed, one has
\begin{align*}
		 \partial_t \mathbb{E}( v_n(t) v_{-n'}(t') ) &= 
		 \partial_t \left(  \frac{\cos((t-t') \langle n \rangle)}{\langle n  \rangle^2} \right) \\
		 &= - 	\frac{\sin((t-t') \langle n \rangle)}{\langle n  \rangle} \\
		 &= - \partial_{t'} \left(  \frac{\cos((t-t') \langle n \rangle)}{\langle n  \rangle^2} \right)\\
		 &= - \partial_{t'} \mathbb{E}( v_n(t) v_{-n'}(t') ). 
	\end{align*}
\end{proof}
The identity \eqref{identity_key} is the key to understanding the cancellation happening in dispersive PDEs. It makes appear a derivative in time that could be moved via integration by parts. We need to introduce graphical notation to describe the stochastic iterated integral from whom we want to compute the expectation and check some cancellations. We use the same decorated trees formalism as introduced in the previous section. The main difference is that all the edges will not have conjugate and therefore the second edge decoration is always $0$. 

The analytical interpretation is  however different, due to the presence of the wave propagator as opposed to the Schr\"odinger propagator. We introduce a new map, denoted by $\Pi_N$, defined for a fix $N \in \mathbb{N}$ by
\begin{equs}
		(\Pi_N  \begin{tikzpicture}[scale=0.2,baseline=-5]
		\coordinate (root) at (0,1);
		\coordinate (tri) at (0,-1);
		\draw[kernels2] (tri) -- (root);
		\node[var] (rootnode) at (root) {\tiny{$ k $}};
		\node[not] (trinode) at (tri) {};
	\end{tikzpicture}) (t) & = \one_{\leq N}(k) \,  v_k(t), \quad (\Pi_N T_1 T_2)(t) = (\Pi_N T_1)(t) (\Pi_N  T_2)(t) \\
		(\Pi_N \mathcal{I}_{(\mathfrak{t}_2,0)}(\lambda_k T))(t) & = \one_{\leq N}(k) \int^{t}_0 \left( \frac{\sin((t-s) \langle k \rangle)}{\langle k  \rangle} \right) (\Pi_N T)(s) ds.
\end{equs}
As an example, one gets:
\begin{equs}
	(\Pi_N \begin{tikzpicture}[scale=0.2,baseline=-5]
		\coordinate (root) at (0,-1);
		\coordinate (t1) at (-2,1);
		\coordinate (t2) at (2,1);
		\coordinate (t3) at (0,2);
		\draw[kernels2] (t1) -- (root);
		\draw[kernels2] (t2) -- (root);
		\draw[kernels2] (t3) -- (root);
		\node[not] (rootnode) at (root) {};t
		\node[var] (rootnode) at (t1) {\tiny{$ k_{\tiny{1}} $}};
		\node[var] (rootnode) at (t3) {\tiny{$ k_{\tiny{2}} $}};
		\node[var] (trinode) at (t2) {\tiny{$ k_3 $}};
	\end{tikzpicture}  )(t) & = \prod_{i=1}^3 \one_{\leq N}(k_i) \,  v_{k_i}(t),
	\\ ( \Pi_N \begin{tikzpicture}[scale=0.2,baseline=-5]
		\coordinate (root) at (0,0);
		\coordinate (tri) at (0,-2);
		\coordinate (t1) at (-2,2);
		\coordinate (t2) at (2,2);
		\coordinate (t3) at (0,3);
		\draw[kernels2] (t1) -- (root);
		\draw[kernels2] (t2) -- (root);
		\draw[kernels2] (t3) -- (root);
		\draw[symbols] (root) -- (tri);
		\node[not] (rootnode) at (root) {};t
		\node[not] (trinode) at (tri) {};
		\node[var] (rootnode) at (t1) {\tiny{$ k_{\tiny{1}} $}};
		\node[var] (rootnode) at (t3) {\tiny{$ k_{\tiny{2}} $}};
		\node[var] (trinode) at (t2) {\tiny{$ k_3 $}};
	\end{tikzpicture}) (t) & = \one_{\leq N}(k_{123}) \int^{t}_0 \left( \frac{\sin((t-s) \langle k_{123} \rangle)}{\langle k_{123}  \rangle} \right) \prod_{i=1}^3 \one_{\leq N}(k_i) \,  v_{k_i}(s)   ds,
\end{equs}
where $ k_{123} = k_1 + k_2 + k_3 $. We still use the same pairing structure and consider decorated trees of the form $ T^{\mff}_{\mathfrak{e},\mathfrak{p}} $. We assume that $\mathfrak{p}$ is a partition of the leaves of $T$.
One can extend the definition of $ \Pi_N $ to this case but with a non-recursive definition:
\begin{equs} \label{general_pi_2}
	\begin{aligned}
	(\Pi_N T_{\mfe,\mathfrak{p}}^{\mff})(t)  & =  \prod_{u \in \bar{N}_{T} } \one_{\leq N}(\mff(u)) \int  \prod_{e \in E_T^2} \one_{\lbrace  0<t_{e_-} < t_{e_+} \rbrace} \left( \frac{\sin((t_{e_+}-t_{e_-}) \langle \mff(e_-) \rangle)}{\langle \mff(e_-)  \rangle} \right)  \\ & \prod_{\lbrace x,y \rbrace \in \mathfrak{p}} 	\mathbb{E}( v_{\mff(x_-)}(t_{x_+}) v_{\mff(y_-)}(t_{y_+}) ) \prod_{e \in E_T^2} dt_{e_-}
	\end{aligned}
\end{equs}
where $ \bar{N}_T $ corresponds to the inner nodes of $T$. Below, we illustrate this definition with two examples:
 \begin{equs}
 	(\Pi_N \begin{tikzpicture}[scale=0.2,baseline=-5]
 		\coordinate (root) at (0,-1);
 		\coordinate (right) at (3,2);
 		\coordinate (left) at (-3,2);
 		\coordinate (rightr) at (5,5);
 		\coordinate (rightc) at (3,5);
 		\coordinate (rightl) at (1,5);
 		\coordinate (leftl) at (-5,5);
 		\coordinate (leftc) at (-3,5);
 		\coordinate (leftr) at (-1,5);
 		\draw[symbols] (root) -- (right);
 		\draw[symbols] (root) -- (left);
 		\draw[kernels2] (right) -- (rightc);
 		\draw[kernels2] (right) -- (rightr);
 		\draw[kernels2] (right) -- (rightl);
 		\draw[kernels2] (left) -- (leftc);
 		\draw[kernels2] (left) -- (leftr);
 		\draw[kernels2] (left) -- (leftl);
 		\node[not] (rootnode) at (root) {};
 		\node[not] (rootnode) at (right) {};
 		\node[not] (rootnode) at (left) {};
 		\node[var] (rootnode) at (leftl) {\tiny{$ k_{\tiny{1}} $}};
 		\node[var] (rootnode) at (leftc) {\tiny{$ k_{\tiny{2}} $}};
 		\node[var] (rootnode) at (leftr) {\tiny{$ k_{\tiny{3}} $}};
 		\node[var] (rootnode) at (rightl) {\tiny{$ \bar{k}_{\tiny{1}} $}};
 		\node[var] (rootnode) at (rightc) {\tiny{$ \bar{k}_{\tiny{2}} $}};
 		\node[var] (rootnode) at (rightr) {\tiny{$ \bar{k}_{\tiny{3}} $}};
 	\end{tikzpicture})(t)  & = \one_{\leq N}(k_{123})  \prod_{i =1 }^3 \one_{\leq N}(k_i) \int_0^t \int_0^t  \ \left( \frac{\sin((t-s) \langle k_{123} \rangle)}{\langle k_{123}  \rangle} \right)  \\ & \left( \frac{\sin((t-\bar{s}) \langle \bar{k}_{123} \rangle)}{\langle \bar{k}_{123}  \rangle} \right) \prod_{i=1}^3 	\mathbb{E}( v_{k_i}(s) v_{\bar{k}_i}(\bar{s}) ) ds d\bar{s} 
 \end{equs}
where $ \bar{k}_i = - k_i $ and $ \bar{k}_{123} = - k_{123} $. As a second example, one has
\begin{equs}
\, &	(\Pi_N  \begin{tikzpicture}[scale=0.2,baseline=-5]
		\coordinate (root) at (0,-1);
		\coordinate (right) at (2,1);
		\coordinate (left) at (-2,1);
		\coordinate (leftr) at (0,4);
		\coordinate (leftl) at (-4,4);
		\coordinate (leftc) at (-2,4);
		\coordinate (leftll) at (-6,7);
		\coordinate (leftlr) at (-2,7);
		\coordinate (leftlc) at (-4,7);
		\draw[kernels2] (right) -- (root);
		\draw[symbols] (left) -- (root);
		\draw[kernels2] (left) -- (leftc);
		\draw[kernels2] (left) -- (leftr);
		\draw[symbols] (left) -- (leftl);
		\draw[kernels2] (leftl) -- (leftlc);
		\draw[kernels2] (leftl) -- (leftll);
		\draw[kernels2] (leftl) -- (leftlr);
		\node[not] (rootnode) at (root) {};
		\node[not] (rootnode) at (left) {};
		\node[not] (rootnode) at (leftl) {};
		\node[var] (rootnode) at (right) {\tiny{$ \bar{k}_{\tiny{3}} $}};
		\node[var] (rootnode) at (leftr) {\tiny{$ \bar{k}_{\tiny{2}} $}};
		\node[var] (rootnode) at (leftc) {\tiny{$ \bar{k}_{\tiny{1}} $}};
		\node[var] (rootnode) at (leftlr) {\tiny{$ k_{\tiny{3}} $}};
		\node[var] (trinode) at (leftlc) {\tiny{$ k_2 $}};
		\node[var] (trinode) at (leftll) {\tiny{$ k_1 $}};
	\end{tikzpicture}  )(t)   = \one_{\leq N}(k_{123})  \prod_{i =1 }^3 \one_{\leq N}(k_i) \int_0^t \int_0^s  \ \left( \frac{\sin((t-s) \langle k_{3} \rangle)}{\langle k_{3}  \rangle} \right)  \\ & \left( \frac{\sin((s-r) \langle \bar{k}_{123} \rangle)}{\langle \bar{k}_{123}  \rangle} \right) 	\mathbb{E}( v_{k_3}(r) v_{\bar{k}_3}(t) ) \prod_{i=1}^2	\mathbb{E}( v_{k_i}(r) v_{\bar{k}_i}(\bar{s}) ) dr ds. 
\end{equs}
We consider $A$ the alphabet whose letters are given by decorated trees of the form:
\begin{equs} \label{letter_type}
	\begin{tikzpicture}[scale=0.2,baseline=-5]
		\coordinate (root) at (0,-1);
		\coordinate (rightc) at (1,2);
		\coordinate (right) at (3,2);
		\coordinate (leftc) at (-1,2);
		\coordinate (left) at (-3,2);
		\draw[kernels2] (right) -- (root);
		\draw[kernel,darkgreen] (left) -- (root);
		\draw[kernels2] (rightc) -- (root);
		\draw[kernels2] (leftc) -- (root);
		\node[not] (rootnode) at (root) {};
		\node[var] (rootnode) at (left) {\tiny{$ \ell_{\tiny{1}} $}};
		\node[var] (rootnode) at (leftc) {\tiny{$ \ell_{\tiny{2}} $}};
		\node[var] (trinode) at (rightc) {\tiny{$ \ell_{\tiny{3}} $}};
		\node[var] (trinode) at (right) {\tiny{$ \ell_{\tiny{4}} $}};
	\end{tikzpicture}, \quad \begin{tikzpicture}[scale=0.2,baseline=-5]
\coordinate (root) at (0,-1);
\coordinate (right) at (-1.5,2);
\coordinate (left) at (1.5,2);
\draw[kernels2] (right) -- (root);
\draw[kernels2] (left) -- (root);
\node[not] (rootnode) at (root) {};
\node[var] (rootnode) at (left) {\tiny{$ \ell_{\tiny{2}} $}};
\node[var] (trinode) at (right) {\tiny{$ \ell_{\tiny{1}} $}};
\end{tikzpicture}  
\end{equs}
where for the first letter, we assume that $ \sum_{i=1}^4 \ell_i =0 $. The presence of green edges is due to the necessity of keeping track of the time derivative in \eqref{identity_key}.
 We also consider letters where the brown edges can be replaced by green edges. Below, we give one example of such letter
\begin{equs}
	\begin{tikzpicture}[scale=0.2,baseline=-5]
		\coordinate (root) at (0,-1);
		\coordinate (rightc) at (1,2);
		\coordinate (right) at (3,2);
		\coordinate (leftc) at (-1,2);
		\coordinate (left) at (-3,2);
		\draw[kernels2] (right) -- (root);
		\draw[kernel,darkgreen] (left) -- (root);
		\draw[kernels2] (rightc) -- (root);
		\draw[kernels2] (leftc) -- (root);
		\node[not] (rootnode) at (root) {};
		\node[var] (rootnode) at (left) {\tiny{$ \ell_{\tiny{1}} $}};
		\node[var] (rootnode) at (leftc) {\tiny{$ \ell_{\tiny{2}} $}};
		\node[var] (trinode) at (rightc) {\tiny{$ \ell_{\tiny{3}} $}};
		\node[var] (trinode) at (right) {\tiny{$ \ell_{\tiny{4}} $}};
	\end{tikzpicture}.
\end{equs}
 In symbolic notation, one has
\begin{equs}
 \begin{tikzpicture}[scale=0.2,baseline=-5]
 	\coordinate (root) at (0,-1);
 	\coordinate (rightc) at (1,2);
 	\coordinate (right) at (3,2);
 	\coordinate (leftc) at (-1,2);
 	\coordinate (left) at (-3,2);
 	\draw[kernels2] (right) -- (root);
 	\draw[kernel,darkgreen] (left) -- (root);
 	\draw[kernels2] (rightc) -- (root);
 	\draw[kernels2] (leftc) -- (root);
 	\node[not] (rootnode) at (root) {};
 	\node[var] (rootnode) at (left) {\tiny{$ \ell_{\tiny{1}} $}};
 	\node[var] (rootnode) at (leftc) {\tiny{$ \ell_{\tiny{2}} $}};
 	\node[var] (trinode) at (rightc) {\tiny{$ \ell_{\tiny{3}} $}};
 	\node[var] (trinode) at (right) {\tiny{$ \ell_{\tiny{4}} $}};
 \end{tikzpicture} & = \hat{\CI}_{(\mathfrak{t}_1,0)}(\lambda_{\ell_1}) \CI_{(\mathfrak{t}_1,0)}(\lambda_{\ell_2}) \CI_{(\mathfrak{t}_1,0)}(\lambda_{\ell_3}) \CI_{(\mathfrak{t}_1,0)}(\lambda_{\ell_4}) , \\ \begin{tikzpicture}[scale=0.2,baseline=-5]
 	\coordinate (root) at (0,-1);
 	\coordinate (right) at (-1.5,2);
 	\coordinate (left) at (1.5,2);
 	\draw[kernels2] (right) -- (root);
 	\draw[kernels2] (left) -- (root);
 	\node[not] (rootnode) at (root) {};
 	\node[var] (rootnode) at (left) {\tiny{$ \ell_{\tiny{2}} $}};
 	\node[var] (trinode) at (right) {\tiny{$ \ell_{\tiny{1}} $}};
 \end{tikzpicture} & = \CI_{(\mathfrak{t}_1,0)}(\lambda_{\ell_1}) \CI_{(\mathfrak{t}_1,0)}(\lambda_{\ell_2}).
\end{equs} 
where we assume that $ \ell_1= - (\ell_2 + \ell_3 + \ell_4) $. Observe that now the notation $\hat{\mathcal{I}}_{(\mathfrak{t}_1,0)}$ is used for encoding green edges.
Let denote by $ W_A $ words on the alphabet $A$ such that the leaves corresponds to a pairing. The first letter of these words is of the second type from \eqref{letter_type}. The other letters are of the first type. Below, we present an example of such a word:
\begin{equs}
		\begin{tikzpicture}[scale=0.2,baseline=-5]
			\coordinate (root) at (0,-1);
			\coordinate (rightc) at (1,2);
			\coordinate (right) at (3,2);
			\coordinate (leftc) at (-1,2);
			\coordinate (left) at (-3,2);
			\draw[kernels2] (right) -- (root);
			\draw[kernel,darkgreen] (left) -- (root);
			\draw[kernels2] (rightc) -- (root);
			\draw[kernels2] (leftc) -- (root);
			\node[not] (rootnode) at (root) {};
			\node[var] (rootnode) at (left) {\tiny{$ \bar{\ell}_{\tiny{1}} $}};
			\node[var] (rootnode) at (leftc) {\tiny{$ k_{\tiny{1}} $}};
			\node[var] (trinode) at (rightc) {\tiny{$ k_{\tiny{2}} $}};
			\node[var] (trinode) at (right) {\tiny{$ k_{\tiny{3}} $}};
		\end{tikzpicture} 
	\, \, \,
	\begin{tikzpicture}[scale=0.2,baseline=-5]
		\coordinate (root) at (0,-1);
		\coordinate (rightc) at (1,2);
		\coordinate (right) at (3,2);
		\coordinate (leftc) at (-1,2);
		\coordinate (left) at (-3,2);
		\draw[kernels2] (right) -- (root);
		\draw[kernel,darkgreen] (left) -- (root);
		\draw[kernels2] (rightc) -- (root);
		\draw[kernels2] (leftc) -- (root);
		\node[not] (rootnode) at (root) {};
		\node[var] (rootnode) at (left) {\tiny{$ \bar{\ell}_{\tiny{2}} $}};
		\node[var] (rootnode) at (leftc) {\tiny{$ \bar{k}_{\tiny{1}} $}};
		\node[var] (trinode) at (rightc) {\tiny{$ \bar{k}_{\tiny{2}} $}};
		\node[var] (trinode) at (right) {\tiny{$ \bar{k}_{\tiny{3}} $}};
	\end{tikzpicture}  \, \, \, 	\begin{tikzpicture}[scale=0.2,baseline=-5]
	\coordinate (root) at (0,-1);
	\coordinate (right) at (-1.5,2);
	\coordinate (left) at (1.5,2);
	\draw[kernels2] (right) -- (root);
	\draw[kernels2] (left) -- (root);
	\node[not] (rootnode) at (root) {};
	\node[var] (rootnode) at (left) {\tiny{$ \ell_{\tiny{2}} $}};
	\node[var] (trinode) at (right) {\tiny{$ \ell_{\tiny{1}} $}};
\end{tikzpicture}
\end{equs}
where  $ \ell_1 = k_1 + k_2 + k_3 $. We equip these words with the shuffle product $\shuffle$ defined in the previous section in \eqref{shuffle_def}.

Now, we define a natural morphism between $ (\CH,\cdot_F) $ and $ (T(A), \shuffle) $ that sends the forest product $ \cdot_F $ to the shuffle product. 
It is given inductively on the symbolic notation by
\begin{equs} \label{arborification_Wave}
	\begin{aligned}
		\ &	\mathfrak{a}( \prod_{j=1}^n \CI_{(\mathfrak{t}_1,0)}(\lambda_{\ell_j})  \prod_{r=1}^m \CI_{(\mathfrak{t}_2,0)}(\lambda_{k_r} \tau_r))  = 
		\\ &   \left( \mathfrak{a}(\hat{\CI}_{(\mathfrak{t}_1,0)}(\lambda_{\bar{k}_1})\tau_1) \shuffle \cdots \shuffle \mathfrak{a}(\hat{\CI}_{(\mathfrak{t}_1,0)}(\lambda_{\bar{k}_m})\tau_m) \right)
		\\& \prod_{j=1}^n \CI_{(\mathfrak{t}_1,0)}(\lambda_{\ell_j})  \prod_{r=1}^m \CI_{(\mathfrak{t}_1,0)}(\lambda_{k_r} \tau_r).
	\end{aligned}
\end{equs}
This formula is slightly different from \eqref{arborification_NLS}. Indeed, the rightmost letter does not contain edges of type $ \hat{\mathcal{I}}$.
  In the end, we have transformed edges of type $ \mathfrak{t}_2 $ into two edges of type $ \mathfrak{t}_1 $.
%  This formula is slightly different from \eqref{arborification_NLS}. Indeed, the roghtmost letter does not contain edges of type $ \hat{\mathcal{I}}$.
%  In the end, we have transformed edges of type $ \mathfrak{t}_2 $ into two edges of type $ \mathfrak{t}_1 $. 
  We also give an alternative definition via 
  a Butcher-Connes-Kreimer type coproduct $ \Delta_{\text{\tiny{Wave}}} : \CF \rightarrow \CF \otimes \CF $, a different version of the one introduced in \cite{BS},  defined recursively by
  \begin{equs} \label{BCK_new}
  	\begin{aligned}
  		\Delta_{\text{\tiny{Wave}}} \CI_{(\mathfrak{t}_1,p)}( \lambda_{k}  ) & = \left( \one \, \otimes \CI_{(\mathfrak{t}_1,p)}( \lambda_{k}  )   \right), \\ 
  		\Delta_{\text{\tiny{Wave}}} \CI_{(\mathfrak{t}_2,p)}( \lambda_{k}  T ) & = \left( \id \, \otimes \CI_{(\mathfrak{t}_2,p)}( \lambda_{k}  \cdot ) \right) \Delta_{\text{\tiny{Wave}}} T \\ &  \   + \hat{\CI}_{(\mathfrak{t}_1,\bar{p})}(\lambda_{\bar{k}} )T   \otimes \CI_{(\mathfrak{t}_1,p)}(\lambda_k)
  	\end{aligned}
  \end{equs}
  and it is extended multiplicatively as the $ \Delta_{\text{\tiny{Wave}}}$. Then, the arborification map $ \mathfrak{a} $ on a decorated tree $T$ is given by
  \begin{equs} \label{arbo_new}		\mathfrak{a}(  T )
  	= \mathcal{M}_{\tiny{\text{c}}}\left( \mathfrak{a}  \otimes P_{A}   \right) \Delta_{\text{\tiny{Wave}}}  T.
  \end{equs}
As an example, one has
\begin{equs}
\mathfrak{a} (T_1) = 	\mathfrak{a} ( \begin{tikzpicture}[scale=0.2,baseline=-5]
		\coordinate (root) at (0,-1);
		\coordinate (right) at (2,1);
		\coordinate (left) at (-2,1);
		\coordinate (leftr) at (0,4);
		\coordinate (leftl) at (-4,4);
		\coordinate (leftc) at (-2,4);
		\coordinate (leftll) at (-6,7);
		\coordinate (leftlr) at (-2,7);
		\coordinate (leftlc) at (-4,7);
		\draw[kernels2] (right) -- (root);
		\draw[symbols] (left) -- (root);
		\draw[kernels2] (left) -- (leftc);
		\draw[kernels2] (left) -- (leftr);
		\draw[symbols] (left) -- (leftl);
		\draw[kernels2] (leftl) -- (leftlc);
		\draw[kernels2] (leftl) -- (leftll);
		\draw[kernels2] (leftl) -- (leftlr);
		\node[not] (rootnode) at (root) {};
		\node[not] (rootnode) at (left) {};
		\node[not] (rootnode) at (leftl) {};
		\node[var] (rootnode) at (right) {\tiny{$ \bar{k}_{\tiny{3}} $}};
		\node[var] (rootnode) at (leftr) {\tiny{$ \bar{k}_{\tiny{2}} $}};
		\node[var] (rootnode) at (leftc) {\tiny{$ \bar{k}_{\tiny{1}} $}};
		\node[var] (rootnode) at (leftlr) {\tiny{$ k_{\tiny{3}} $}};
		\node[var] (trinode) at (leftlc) {\tiny{$ k_2 $}};
		\node[var] (trinode) at (leftll) {\tiny{$ k_1 $}};
	\end{tikzpicture}) & = \mathfrak{a}( \begin{tikzpicture}[scale=0.2,baseline=-5]
	\coordinate (root) at (0,-1);
	\coordinate (right) at (3,2);
		\coordinate (rightc) at (1,2);
	\coordinate (left) at (-3,2);
		\coordinate (leftc) at (-1,2);
	\coordinate (leftc1) at (-3,5);
	\coordinate (leftc2) at (-1,5);
	\coordinate (leftc3) at (1,5);
	\draw[kernels2] (right) -- (root);
	\draw[kernels2] (rightc) -- (root);
	\draw[symbols] (leftc) -- (root);
	\draw[kernel,darkgreen] (left) -- (root);
	\draw[kernels2] (leftc3) -- (leftc);
	\draw[kernels2] (leftc2) -- (leftc);
	\draw[kernels2] (leftc1) -- (leftc);
	\node[not] (rootnode) at (root) {};
	\node[not] (rootnode) at (leftc) {};
	\node[var] (rootnode) at (right) {\tiny{$ \bar{k}_{\tiny{2}} $}};
	\node[var] (rootnode) at (rightc) {\tiny{$ \bar{k}_{\tiny{1}} $}};
	\node[var] (rootnode) at (left) {\tiny{$ \bar{\ell}_{\tiny{1}} $}};
	\node[var] (trinode) at (leftc1) {\tiny{$ k_1 $}};
	\node[var] (trinode) at (leftc2) {\tiny{$ k_2 $}};
	\node[var] (trinode) at (leftc3) {\tiny{$ k_3 $}};
\end{tikzpicture} ) \begin{tikzpicture}[scale=0.2,baseline=-5]
	\coordinate (root) at (0,-1);
	\coordinate (right) at (-1.5,2);
	\coordinate (left) at (1.5,2);
	\draw[kernels2] (right) -- (root);
	\draw[kernels2] (left) -- (root);
	\node[not] (rootnode) at (root) {};
	\node[var] (rootnode) at (left) {\tiny{$ \bar{k}_{\tiny{3}} $}};
	\node[var] (trinode) at (right) {\tiny{$ \ell_{\tiny{1}} $}};
\end{tikzpicture} 
\\ & = 	\begin{tikzpicture}[scale=0.2,baseline=-5]
	\coordinate (root) at (0,-1);
	\coordinate (rightc) at (1,2);
	\coordinate (right) at (3,2);
	\coordinate (leftc) at (-1,2);
	\coordinate (left) at (-3,2);
	\draw[kernels2] (right) -- (root);
	\draw[kernel,darkgreen] (left) -- (root);
	\draw[kernels2] (rightc) -- (root);
	\draw[kernels2] (leftc) -- (root);
	\node[not] (rootnode) at (root) {};
	\node[var] (rootnode) at (left) {\tiny{$ \bar{\ell}_{\tiny{2}} $}};
	\node[var] (rootnode) at (leftc) {\tiny{$ k_{\tiny{1}} $}};
	\node[var] (trinode) at (rightc) {\tiny{$ k_{\tiny{2}} $}};
	\node[var] (trinode) at (right) {\tiny{$ k_{\tiny{3}} $}};
\end{tikzpicture} 
\, \, \,
\begin{tikzpicture}[scale=0.2,baseline=-5]
	\coordinate (root) at (0,-1);
	\coordinate (rightc) at (1,2);
	\coordinate (right) at (3,2);
	\coordinate (leftc) at (-1,2);
	\coordinate (left) at (-3,2);
	\draw[kernels2] (right) -- (root);
	\draw[kernel,darkgreen] (left) -- (root);
	\draw[kernels2] (rightc) -- (root);
	\draw[kernels2] (leftc) -- (root);
	\node[not] (rootnode) at (root) {};
	\node[var] (rootnode) at (left) {\tiny{$ \bar{\ell}_{\tiny{1}} $}};
	\node[var] (rootnode) at (leftc) {\tiny{$ \ell_{\tiny{2}} $}};
	\node[var] (trinode) at (rightc) {\tiny{$ \bar{k}_{\tiny{1}} $}};
	\node[var] (trinode) at (right) {\tiny{$ \bar{k}_{\tiny{2}} $}};
\end{tikzpicture}  \, \, \, 	\begin{tikzpicture}[scale=0.2,baseline=-5]
	\coordinate (root) at (0,-1);
	\coordinate (right) at (-1.5,2);
	\coordinate (left) at (1.5,2);
	\draw[kernels2] (right) -- (root);
	\draw[kernels2] (left) -- (root);
	\node[not] (rootnode) at (root) {};
	\node[var] (rootnode) at (left) {\tiny{$ \bar{k}_{\tiny{3}} $}};
	\node[var] (trinode) at (right) {\tiny{$ \ell_{\tiny{1}} $}};
\end{tikzpicture}.
\end{equs}
We define an evaluation map on words of this alphabet given by
\begin{equs} \label{words_Wave}
\ &	(\Pi_N^A w_{\mathfrak{p}})(t)   =  \prod_{i=1 }^{n} \one_{\leq N}(w_i) \int_{0 < t_1 <.. <t_n = t}  \\ & \prod_{\lbrace x,y \rbrace \in \mathfrak{p}_1} \partial_{t_{x_+} \wedge t_{y_{-}}}	\mathbb{E}( v_{\mff(x_-)}(t_{x_+}) v_{\mff(y_{-})}(t_{y_{-}}) ) \prod_{\lbrace x,y \rbrace \in \mathfrak{p}_2} 	\mathbb{E}( v_{\mff(x_-)}(t_{x_+}) v_{\mff(y_-)}(t_{y_+}) ) \prod_{i=1}^{n-1} dt_i.
\end{equs}
 with
\begin{equs}
	\one_{\leq N}(\begin{tikzpicture}[scale=0.2,baseline=-5]
		\coordinate (root) at (0,-1);
		\coordinate (rightc) at (1,2);
		\coordinate (right) at (3,2);
		\coordinate (leftc) at (-1,2);
		\coordinate (left) at (-3,2);
		\draw[kernels2] (right) -- (root);
		\draw[kernels2] (left) -- (root);
		\draw[kernels2] (rightc) -- (root);
		\draw[kernels2] (leftc) -- (root);
		\node[not] (rootnode) at (root) {};
		\node[var] (rootnode) at (left) {\tiny{$ \ell_{\tiny{1}} $}};
		\node[var] (rootnode) at (leftc) {\tiny{$ \ell_{\tiny{2}} $}};
		\node[var] (trinode) at (rightc) {\tiny{$ \ell_{\tiny{3}} $}};
		\node[var] (trinode) at (right) {\tiny{$ \ell_{\tiny{4}} $}};
	\end{tikzpicture}) = \prod_{i=1}^4 \one_{\leq N}(\ell_i).
\end{equs}
and we get the same value if some of the edges are green.
One has
\begin{equs} \label{word_T_1}
	\begin{aligned}
		(\Pi_N^A \mathfrak{a}(T_1))(t)  & =  \one_{\leq N}(k_{123})\prod_{i=1 }^{3} \one_{\leq N}(k_i) \int_{0 < r < s<t} \partial_s	\mathbb{E}( v_{\bar{\ell}_1}(s) v_{\ell_1}(t) )  \\ &  \partial_r	\mathbb{E}( v_{\bar{\ell}_2}(r) v_{\ell_2}(s) ) 	\mathbb{E}( v_{k_3}(r) v_{\bar{k}_3}(t) )\prod_{i=1}^2 \mathbb{E}( v_{k_i}(r) v_{\bar{k}_i}(s) ) dr ds.
		\end{aligned}
\end{equs}

\begin{theorem} \label{main_theorem_Wave} For any decorated tree $T_{\mfe,\mathfrak{p}}^{\mff}$, one has
	\begin{equs}
		(\Pi_N T_{\mfe,\mathfrak{p}}^{\mff})(t) = \left( \Pi_N^A \mathfrak{a}(T_{\mfe,\mathfrak{p}}^{\mff}) \right) (t).
	\end{equs}
\end{theorem}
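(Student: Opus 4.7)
The plan is to mirror the proof of Theorem~\ref{main_theorem_NLS}, replacing Proposition~\ref{covariance_2} by Proposition~\ref{covariance} and keeping track of the additional time derivatives produced by the wave identity. Starting from the explicit formula \eqref{general_pi_2}, I would apply Proposition~\ref{covariance} to each edge $e \in E_T^2$, rewriting
\begin{equs}
\frac{\sin((t_{e_+}-t_{e_-})\langle \mff(e_-)\rangle)}{\langle \mff(e_-)\rangle} = \partial_{t_{e_-}} \mathbb{E}\bigl(v_{\mff(e_-)}(t_{e_+}) v_{-\mff(e_-)}(t_{e_-})\bigr).
\end{equs}
This splits each internal edge into two ``virtual'' terminal edges connected by an expectation, producing a new pairing $\mathfrak{p}_1$ on top of the original leaf pairing $\mathfrak{p}_2 = \mathfrak{p}$. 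The cutoff factors $\prod_{u \in \bar N_T} \one_{\leq N}(\mff(u))$ are unchanged and will match the analogous product in $\Pi^A_N$ thanks to the frequency conservation rule~\eqref{frequencies_identity} applied inside every letter.

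Next, I would decompose the simplex $\{0 < t_{e_-} < t_{e_+}\}_{e \in E_T^2}$ as a disjoint union over all linear extensions $\sigma \in \mathfrak{S}(\bar N_T)$ of the partial order on inner nodes induced by $T$. For each such $\sigma$, one obtains an ordered integral $\int_{0 < t_{\sigma(u_1)} < \dots < t_{\sigma(u_n)} = t}$, and matching letters to inner nodes in the order dictated by $\sigma$ identifies the integrand term by term with $\Pi_N^A w_{\mathfrak{p}_1 \cup \mathfrak{p}_2}$ for a specific word $w \in W_A$. Concretely, each inner node $u \neq \varrho_T$ with outgoing edge of type $(\mathfrak{t}_2,0)$ and three children edges contributes exactly one four-edge letter (with the green edge on the parent side, three brown edges on the child side), while the root $\varrho_T$ contributes the leftmost letter of~$w$ of the second type from \eqref{letter_type}, carrying the leaves of $T$ attached to $\varrho_T$ together with the virtual brown leaves coming from the splittings of the $\mathfrak{t}_2$-edges adjacent to $\varrho_T$.

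The remaining step is to verify, by induction on the tree, that the recursive definition \eqref{arborification_Wave} produces exactly the sum over all such linear extensions $\sigma$. For each subtree $\mathcal{I}_{(\mathfrak{t}_2,0)}(\lambda_{k_r}\tau_r)$ attached to the root, the inductive hypothesis yields that $\mathfrak{a}(\hat{\mathcal{I}}_{(\mathfrak{t}_1,0)}(\lambda_{\bar k_r})\tau_r)$ already enumerates all linear extensions of $\tau_r$ with the outgoing edge of $\tau_r$ turned into a green leaf of frequency $\bar k_r = -k_r$. The shuffle product then interleaves these subwords into every linear extension of the full tree compatible with the partial order, and the rightmost concatenated letter assembles the root data with the ``mirror'' brown leaves (frequencies $k_r$) corresponding to the edges connecting $\varrho_T$ to its children. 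This is the structural step directly analogous to the NLS case.

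The main delicate point, specific to the wave equation, is the bookkeeping of the time derivatives. Proposition~\ref{covariance} offers two possible placements for the derivative, $-\partial_{t_{e_+}}$ or $\partial_{t_{e_-}}$; to match the convention $\partial_{t_{x_+}\wedge t_{y_-}}$ used in \eqref{words_Wave}, I would systematically choose the derivative at the smaller of the two times, which is precisely $t_{e_-}$. Since the two leaves produced by the splitting of an internal edge end up in two distinct letters of the word, positioned at times $t_i < t_j$ according to the ordering encoded by $\sigma$, this identifies the correct derivative placement without sign ambiguity. Once this last verification is done, each permutation $\sigma$ contributes the expected word appearing in $\mathfrak{a}(T_{\mfe,\mathfrak{p}}^{\mff})$, and summing over $\sigma$ yields the stated identity.
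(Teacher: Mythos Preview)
Your proposal is correct and follows essentially the same route as the paper's proof: apply Proposition~\ref{covariance} to each $\mathfrak{t}_2$-edge, substitute into \eqref{general_pi_2}, decompose the integration domain over linear extensions $\sigma\in\mathfrak{S}(\bar N_T)$ of the tree order, and identify each term with a word in $\mathfrak{a}(T_{\mfe,\mathfrak{p}}^{\mff})$; the paper's proof is terser and simply refers back to Theorem~\ref{main_theorem_NLS} for the last identification, whereas you spell out the inductive matching and the derivative-placement bookkeeping. One small slip: the root letter sits at the \emph{rightmost} position of each word (largest time $t_n=t$), not the leftmost, as is visible in \eqref{arborification_Wave} and the worked examples.
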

\begin{proof}
	One has from \eqref{general_pi_2}
	\begin{equs} 
		\begin{aligned}
			(\Pi_N T_{\mfe,\mathfrak{p}}^{\mff})(t)  & =  \prod_{u \in \bar{N}_{T} } \one_{\leq N}(\mff(u)) \int  \prod_{e \in E_T^2} \one_{\lbrace  0<t_{e_-} < t_{e_+} \rbrace} \left( \frac{\sin((t_{e_+}-t_{e_-}) \langle \mff(e_-) \rangle)}{\langle \mff(e_-)  \rangle} \right)  \\ & \prod_{\lbrace x,y \rbrace \in \mathfrak{p}} 	\mathbb{E}( v_{\mff(x_-)}(t_{x_+}) v_{\mff(y_-)}(t_{y_+}) ) \prod_{e \in E_T^2} dt_{e_-}.
		\end{aligned}
	\end{equs}
	Then, from Proposition  \ref{covariance}, one has
	\begin{equs} 
		\frac{\sin((t_{e_+}-t_{e_-}) \langle \mff(e_-) \rangle)}{\langle \mff(e_-)  \rangle} = \partial_{t_{e_-}}	\mathbb{E}( v_{\mff(e_-)}(t_{e_+}) v_{-\mff(e_-)}(t_{e_-}) ).
	\end{equs}
	By substituting this relation into the identity for $ (\Pi_N T_{\mfe,\mathfrak{p}}^{\mff})(t)  $, one gets
	\begin{equs}
		(\Pi_N T_{\mfe,\mathfrak{p}}^{\mff})(t)  & =  \prod_{u \in \bar{N}_{T} } \one_{\leq N}(\mff(u)) \int  \prod_{e \in E_T^2} \one_{\lbrace  0<t_{e_-} < t_{e_+} \rbrace}  (\partial_{t_{e_-}}	\mathbb{E}( v_{\mff(e_-)}(t_{e_+}) v_{-\mff(e_-)}(t_{e_-}) ))  \\ & \prod_{\lbrace x,y \rbrace \in \mathfrak{p}} 	\mathbb{E}( v_{\mff(x_-)}(t_{x_+}) v_{\mff(y_-)}(t_{y_+}) ) \prod_{e \in E_T^2} dt_{e_-}.
	\end{equs}
	We fix an order on the nodes $ \bar{N}_T $. Let $ n = \mathrm{Card}(\bar{N}_T ) $, then we define $  \mathfrak{S}(\bar{N}_T ) $ as the permutation on these nodes that respects the partial order given by the tree structure  of $T$.
	Then, one has 
	\begin{equs}
	\ &	(\Pi_N T_{\mfe,\mathfrak{p}}^{\mff})(t)   = \sum_{\sigma \in \mathfrak{S}(\bar{N}_{T})} \prod_{u \in \bar{N}_{T} } \one_{\leq N}(\mff(u)) \int_{t_{\sigma(u_1)} < ... < t_{\sigma(u_n)}} \\ & \prod_{\lbrace x,y \rbrace \in \mathfrak{p}_1} \partial_{t_{x_+}\wedge t_{y_{-}}}	\mathbb{E}( v_{\mff(x_-)}(t_{x_+}) v_{\mff(y_{-})}(t_{y_{-}}) )  \prod_{\lbrace x,y \rbrace \in \mathfrak{p}_2} 	\mathbb{E}( v_{\mff(x_-)}(t_{x_+}) v_{\mff(y_-)}(t_{y_+}) ) \prod_{i=1}^n dt_{\sigma(u_i)}.
	\end{equs}
	where $ \mathfrak{p}_1 $ corresponds to the new pairing coming from the edges in $E_T^2$ and $ \mathfrak{p}_2 = \mathfrak{p} $. We conclude as the same as in the proof of Theorem~\ref{main_theorem_NLS}.
\end{proof}

In the next proposition, we compute the main cancellation obtained in \cite{BDNY24}. We use strongly the words formalism for performing the computations.
\begin{proposition}\label{example_computation_2} One has
	\begin{align*}
		\mathfrak{C}_N(t) & : =\sum_{k_1,k_2,k_3 \in \mathbb{Z}^3} 6 (\Pi_N^A \mathfrak{a} (T_1))(t) - \sum_{k_3 \in \mathbb{Z}^3}
		2\Gamma_N(k_3) 
		\Pi_N^A(
		\begin{tikzpicture}[scale=0.2,baseline=-5]
			\coordinate (root) at (0,-1);
			\coordinate (right) at (-1.5,2);
			\coordinate (left) at (1.5,2);
			\draw[kernel, darkgreen] (right) -- (root);
			\draw[kernels2] (left) -- (root);
			\node[not] (rootnode) at (root) {};
			\node[var] (rootnode) at (left) {\tiny{$ k_{\tiny{3}} $}};
			\node[var] (trinode) at (right) {\tiny{$ \bar{\ell}_{\tiny{1}} $}};
		\end{tikzpicture}
		\, \, \,
		\begin{tikzpicture}[scale=0.2,baseline=-5]
			\coordinate (root) at (0,-1);
			\coordinate (right) at (-1.5,2);
			\coordinate (left) at (1.5,2);
			\draw[kernels2] (right) -- (root);
			\draw[kernels2] (left) -- (root);
			\node[not] (rootnode) at (root) {};
			\node[var] (rootnode) at (left) {\tiny{$ \bar{k}_{\tiny{3}} $}};
			\node[var] (trinode) at (right) {\tiny{$ \ell_{\tiny{1}} $}};
		\end{tikzpicture}
		)(t)
	\\ & 	+ \sum_{k_1,k_2,k_3 \in \mathbb{Z}^3} (\Pi_N^A \mathfrak{a}(T_2) )(t) \\
		&= - 2 \sum_{  k_1+k_2+k_3 = \ell_2} \one_{\leq N}(\ell_2)\prod_{i=1}^3 \one_{\leq N}(k_i)\\
		&\phantom{= -}\times \int_{0< s<t} \frac{\cos(s \langle \ell_2 \rangle)}{\langle \ell_2 \rangle^2} \frac{\cos(s \langle k_1 \rangle)}{\langle k_1 \rangle^2} \frac{\cos(s \langle k_2 \rangle)}{\langle k_2 \rangle^2} \frac{\sin((t-s) \langle k_3 \rangle)}{\langle k_3 \rangle} \frac{\cos(t \langle k_3 \rangle)}{\langle k_3 \rangle^2} ds
	\end{align*}
	where
	 \begin{equs}
	 	\Gamma_N(k_3) = \sum_{k_1, k_2 \in \mathbb{Z}^3}   \frac{\one_{\leq N}(k_1)}{\langle k_{1} \rangle^2} \frac{\one_{\leq N}(k_2)}{\langle k_{2} \rangle^2}
	 	\frac{\one_{\leq N}(k_{123})}{\langle k_{123} \rangle^2}.
	 \end{equs}
	\end{proposition}

\begin{proof}
We continue with the decorated tree $T_1$ using \eqref{word_T_1}. Via an integration by parts in the variable $r$, one gets:
\begin{equs}
	\sum_{k_1,k_2,k_3 \in \mathbb{Z}^3}(\Pi_N^A \mathfrak{a} (T_1)) = 
	-\sum_{k_1,k_2,k_3 \in \mathbb{Z}^3} \Pi^A_N(
	% [inline block 1: 50 envs, 32071 chars -> data_tex | \begin{tikzpicture}[scale=0.2,baseline=-5] 		\coordinate (root) at (0,-1);...]

)$ 
corresponds to the renormalisation of the equation, 
\end{proof}

 The notations of the previous proposition, $ \Gamma_N, \mathfrak{C}_N $  are chosen to be consistent with \cite[Section 6]{BDNY24}. Moreover,  the term $\mathfrak{C}_N$ can be shown to be uniformly bounded in $N$, see \cite[Lemma 6.23]{BDNY24}.

\end{document}